\newcommand{\mm}{\mathfrak m}
\newcommand{\E}{\mathds{E}}
\newcommand{\V}{\mathds{V}}
\newcommand{\N}{\mathds{N}}
\newcommand{\R}{\mathds{R}}
\newcommand{\M}{\mathds{M}}
\def\P{{\mathds P}}
\newcommand{\ind}{{\mathds 1}}
\newcommand{\Nc}{\mathcal{N}}
\newcommand{\xb}{{\bf x}}
\newcommand{\yb}{{\bf y}}
\newcommand{\zb}{{\bf z}}
\newcommand{\Nb}{{\bf N}}
\DeclareMathOperator{\diam}{diam}
\newtheoremstyle{thm}{}{}%
     {\em}%         Body font
     {}%         Indent amount (empty = no indent, \parindent = para indent)
     {\bf}% Thm head font
     {.}%        Punctuation after thm head
     {0.5em}%     Space after thm head (\newline = linebreak)
     {\thmname{#1}\thmnumber{ #2}\thmnote{ #3}}%         Thm head spec
\newtheoremstyle{def}{}{}%
     {\rm}%         Body font
     {}%         Indent amount (empty = no indent, \parindent = para indent)
     {\bf}% Thm head font
     {.}%        Punctuation after thm head
     {0.5em}%     Space after thm head (\newline = linebreak)
     {\thmname{#1}\thmnumber{ #2}\thmnote{ #3}}%         Thm head spec
\theoremstyle{thm}
\newtheorem{thm}{Theorem}[section]
\newtheorem{lem}[thm]{Lemma}
\newtheorem{cor}[thm]{Corollary}
\newtheorem{prop}[thm]{Proposition}
\theoremstyle{def}
\newtheorem*{rem*}{Remark}
\let\epsilon=\varepsilon
\let\phi=\varphi
\begin{document}

\title[Concentration for Subgraph Counts]{Concentration for Poisson U-Statistics:\\ Subgraph Counts in Random Geometric Graphs}

\author{Sascha Bachmann}
\address{Sascha Bachmann \newline Institut f\"ur Mathematik \newline Universit\"at Osnabr\"uck \newline 49069 Osnabr\"uck, Germany \newline Email: sascha.bachmann@uni-osnabrueck.de}

\author{Matthias Reitzner}
\address{Matthias Reitzner \newline Institut f\"ur Mathematik \newline Universit\"at Osnabr\"uck \newline 49069 Osnabr\"uck, Germany \newline Email: matthias.reitzner@uni-osnabrueck.de}

\thanks{Both authors are partially supported by the German Research Foundation DFG-GRK 1916.}

\begin{abstract}
Concentration inequalities for subgraph counts in random geometric graphs built over Poisson point processes are proved. The estimates give upper bounds for the probabilities $\P(N\geq M +r)$ and $\P(N\leq M - r)$ where $M$ is either a median or the expectation of a subgraph count $N$. The bounds for the lower tail have a fast Gaussian decay and the bounds for the upper tail satisfy an optimality condition. A special feature of the presented inequalities is that the underlying Poisson process does not need to have finite intensity measure. 

The tail estimates for subgraph counts follow from concentration inequalities for more general local Poisson U-statistics. These bounds are proved using recent general concentration results for Poisson U-statistics and techniques based on the convex distance for Poisson point processes.
\smallskip
\end{abstract}

\keywords{Random Graphs, Subgraph Counts, Concentration Inequalities, Stochastic Geometry, Poisson Point Process, Convex Distance.}

\smallskip

%\noindent{\bf 2010 AMS Classification:} 

\subjclass[2010]{Primary 60D05; Secondary 05C80, 60C05}
\maketitle

\section{Introduction}
Random geometric graphs have been a very active area of research for some decades. The most basic model of these graphs is obtained by choosing a random set of vertices in $\R^d$ and connecting any two vertices by an edge whenever their distance does not exceed some fixed parameter $\rho>0$. In the seminal work \cite{G_1961} by Gilbert, these graphs were suggested as a model for random communication networks. In consequence of the increasing relevance of real-world networks like social networks or wireless networks, variations of Gilbert's model have gained considerable attention in recent years, see e.g. \cite{LP_2013_1, LP_2013_2, CYG_2014, MP_2010, DF_2011}. For a detailed historical overview on the topic, we refer the reader to Penrose's fundamental monograph \cite{P_2003} on random geometric graphs.
\smallskip

For a given random geometric graph $\mathfrak{G}$ and a fixed connected graph $H$ on $k$ vertices, the corresponding \emph{subgraph count} $N^H$ is the random variable that counts the number of occurrences of $H$ as a subgraph of $\mathfrak{G}$. Note that only non-induced subgraphs are considered throughout the present work. The resulting class of random variables has been studied by many authors, see \cite[Chapter 3]{P_2003} for a historical overview on related results.
\smallskip

The purpose of the present paper is to establish \emph{concentration inequalities} for $N^H$, i.e. upper bounds on the probabilities $\P(N^H\geq M+r)$ and $\P(N_H\leq M-r)$, when the vertices of $\mathfrak{G}$ are given by the points of a Poisson point process. Our concentration results for subgraph counts, gathered in the following theorem, provide estimates for the cases where $M$ is either the expectation or a median of $N^H$.

\begin{thm} \label{main}
Let $\eta$ be a Poisson point process in $\R^d$ with non-atomic intensity measure $\mu$. Let $H$ be a connected graph on $k\geq 2$ vertices. Consider the corresponding subgraph count $N^H=N$ in a random geometric graph associated to $\eta$. Assume that $\mu$ is such that almost surely $N < \infty$. Then all moments of $N$ exist and for all $r\geq 0$,
\begin{align*}
\P(N\geq \E N + r) &\leq \exp\left(-\frac{((\E N + r)^{1/(2k)}-(\E N)^{1/(2k)})^2}{2k^2c_d}\right),\\
\P(N\leq \E N - r) &\leq \exp\left(-\frac{r^2}{2k\V N}\right),\\
\P(N> \M N+r) &\leq 2 \exp\left(-\frac{r^2}{4k^2c_d(r+\M N)^{2-1/k}}\right),\\
\P(N< \M N-r) &\leq 2 \exp\left(-\frac{r^2}{4k^2c_d(\M N)^{2-1/k}}\right),
\end{align*}
where $c_d>0$ is a constant that depends on $H$ and $d$ only, $\E N$ and $\V N$ denote the expectation and the variance of $N$, and $\M N$ is the smallest median of $N$.
\end{thm}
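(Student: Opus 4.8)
The plan is to realise $N=N^H$ as a local, nonnegative Poisson U-statistic of order $k$ and then run it through concentration machinery for such functionals. With $\rho$ the connection radius and $V(H)=\{1,\dots,k\}$,
\begin{equation*}
N=\sum_{(x_1,\dots,x_k)\in\eta^k_{\neq}} f(x_1,\dots,x_k),\qquad
f(x_1,\dots,x_k)=\frac{1}{k!\,|\mathrm{Aut}(H)|}\sum_{\sigma}\ \prod_{\{i,j\}\in E(H)}\ind\{\|x_{\sigma(i)}-x_{\sigma(j)}\|\le\rho\},
\end{equation*}
the inner sum over the permutations $\sigma$ of $\{1,\dots,k\}$ and $\eta^k_{\neq}$ the ordered $k$-tuples of distinct points of $\eta$; so $f$ is symmetric, $0\le f\le 1$, and, $H$ being connected, $f$ vanishes unless $\diam\{x_1,\dots,x_k\}\le(k-1)\rho$. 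This locality is exactly what keeps the relevant quantities finite when $\mu$ is infinite, and the assumption $N<\infty$ a.s.\ enters here. I would first prove the two median-type bounds, which require no integrability of $N$, deduce from their exponential tails that all moments of $N$ are finite, and only then pass to the mean-type bounds.

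For the upper median bound I would use the convex distance $d_T$ for Poisson point processes. Fixing a threshold $a$ and setting $A=\{N\le a\}$, one has $\P(A)\,\E\exp(\tfrac14 d_T(\eta,A)^2)\le 1$, hence $\P(d_T(\eta,A)\ge t)\le \P(A)^{-1}e^{-t^2/4}$. The core is a deterministic lower bound: writing $D_xN(\omega)=N(\omega)-N(\omega\setminus\{x\})$ and $S(\omega)=\sum_{x\in\omega}(D_xN(\omega))^2$, and weighting each $x\in\omega$ by $D_xN(\omega)/\sqrt{S(\omega)}$, one gets $d_T(\omega,A)\ge (N(\omega)-a)/\sqrt{S(\omega)}$ --- because $f\ge0$ forces $N(\omega\cap\omega')\le N(\omega')\le a$ for any $\omega'\in A$, so the deleted points $\omega\setminus\omega'$ destroy at least $N(\omega)-a$ copies of $H$ and hence carry weight at least $(N(\omega)-a)/\sqrt{S(\omega)}$ (combinatorial factors of $k!$ and $|\mathrm{Aut}(H)|$ absorbed into constants). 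Combined with the bound on $S$ stated below and with $a:=\M N$ (so $\P(A)\ge\tfrac12$) this gives the third displayed inequality, and the lower median bound comes from the same circle of ideas applied to $\{N\ge\M N\}$. The two mean-type bounds I would instead deduce from recent general concentration inequalities for Poisson U-statistics: the lower mean bound from the sub-Gaussian lower tail available for every nonnegative U-statistic kernel (no locality needed), giving exactly $\exp(-r^2/(2k\V N))$, and the upper mean bound from the matching Bernstein-type upper tail. In each case the only input specific to subgraph counts is that the contraction norms of $f$, respectively $\V N$, are controlled by constants depending only on $d$ and $H$ times powers of $\E N$.

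Everything thus reduces to the geometric estimate
\begin{equation*}
S(\omega)=\sum_{x\in\omega}\big(D_xN(\omega)\big)^2\ \le\ c_d\,k^2\,N(\omega)^{\,2-1/k},
\end{equation*}
together with its in-expectation counterpart, and this is the step I expect to be the main obstacle. I would prove it by clique packing: a copy of $H$ through $x$ lies in $B(x,(k-1)\rho)$, so $D_xN(\omega)\le C_H\,n_x^{\,k-1}$ with $n_x=|\omega\cap B(x,(k-1)\rho)|$; covering $B(x,(k-1)\rho)$ by $c_d$ balls of radius $\rho/2$ --- a number depending only on $d$ and $k-1$, hence only on $d$ and $H$ --- forces $\omega$ to contain a clique on at least $n_x/c_d$ vertices, and a clique on $s$ vertices already contains $\binom{s}{k}$ copies of $H$; therefore $N(\omega)\ge c'(n_x/c_d)^k$, i.e.\ $n_x\le C\,N(\omega)^{1/k}$, so that $\max_x D_xN(\omega)\le C'N(\omega)^{1-1/k}$. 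Together with $\sum_x D_xN(\omega)=k\cdot\#\{\text{copies of }H\text{ in }\omega\}\le C''k\,N(\omega)$ and $S(\omega)\le(\max_x D_xN(\omega))\sum_x D_xN(\omega)$ this yields the bound; averaging the same argument over $\eta$ controls $\V N$ (via the Poincaré inequality) and the contraction norms of $f$ that enter the general U-statistic inequalities. Finally, the exponential tails from the median-type bounds make all moments of $N$ finite, which both is one of the assertions of the theorem and legitimises the mean-type statements and the interchanges used along the way.
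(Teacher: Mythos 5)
Your overall architecture matches the paper's: realise $N$ as a local nonnegative Poisson U-statistic, prove a pathwise estimate of the form $\sum_{x}(D_xN)^2\le c\,N^{2-1/k}$, and feed it into the convex-distance machinery for the median bounds and into the general concentration results for Poisson U-statistics for the mean bounds. Your proof of the key pathwise estimate is, however, genuinely different from the paper's and arguably more transparent. The paper tiles $\R^d$ into cubes of diagonal $\rho$, reduces $\sum_x F(x,\xi)^2$ to an expression in the cube occupation numbers of the form $\sum_i n_i\prod_l (n_i^{(j_l)})^2$, and then needs a separate rearrangement lemma for permutations plus monotonicity of $p$-norms to dominate this by $\bigl(\sum_i n_i^k\bigr)^{(2k-1)/k}\lesssim F^{(2k-1)/k}$. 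You instead write $\sum_x(D_xN)^2\le(\max_x D_xN)\sum_x D_xN$, use $\sum_x D_xN=kN$ exactly, and control $\max_x D_xN\le C\,n_x^{k-1}\le C'N^{1-1/k}$ by the pigeonhole/clique observation that a ball of radius $\rho/2$ containing $s$ points forces $N\ge\binom{s}{k}$; this avoids the combinatorial lemma entirely and still yields a constant depending only on $d$ and $H$ (and on $\theta$ in the paper's more general model $B(0,\rho)\subseteq S\subseteq B(0,\theta\rho)$, which you implicitly restrict to the Euclidean case). Two small points to clean up: points $x$ with $n_x$ below the pigeonhole threshold must be treated separately (there $\binom{s}{k}$ may vanish, but $D_xN$ is then bounded by a constant and $N\ge 1$ whenever $D_xN>0$), and the constants must be tracked to recover the explicit $c_d$.

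The genuine gap is in the median bounds when $\mu(\R^d)=\infty$, which is precisely the regime the theorem is designed to cover ($N<\infty$ a.s.\ is compatible with $\eta$ being a.s.\ infinite). The convex distance $d_T$ and the inequality $\P(\eta\in A)\,\P(d_T(\eta,A)\ge s)\le e^{-s^2/4}$ are formulated on the space $\Nb_{\rm fin}$ of \emph{finite} configurations; if $\eta$ is a.s.\ infinite then $\P(\eta\in A)=0$ for every $A\subseteq\Nb_{\rm fin}$ and your argument, as written, collapses. The paper resolves this by truncation: it proves the median inequality for $\eta_n=\eta\cap B(0,n)$ and passes to the limit, which requires a nontrivial auxiliary lemma producing medians $\mm_n$ of $F(\eta_n)$ that increase to the smallest median $\M F$ (using monotonicity of $F(\eta_n)$ in $n$ and the Portmanteau theorem), followed by a second Portmanteau application to transfer the tail bound --- this limiting step is also the reason the final median inequalities carry strict inequalities $\P(N>\M N+r)$ and $\P(N<\M N-r)$. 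Your proposal needs this (or an equivalent) approximation argument to reach the stated generality; the mean-type bounds and the ``all moments exist'' claim are fine as you arrange them, since the cited general results for well-behaved local U-statistics already accommodate infinite intensity measures.
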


One might think that the study of subgraph counts needs to be restricted to finite graphs to ensure that all occuring variables are finite. We stress that this is \emph{not} the case. There are Poisson processes in $\R^d$, such that the associated random geometric graph has a.s. infinitely many vertices but still a.s. a finite number of edges. Similarly, one can also have a.s. infinitely many edges, but still a finite number of triangles. This phenomenon seems to be quite unexplored so far. In the context of concentration properties, a natural question is whether concentration inequalities also hold in these situations. We emphasize that Theorem~\ref{main} only requires $N < \infty$ almost surely and hence covers such cases, as opposed to previous results from \cite{RST_2013, LR_2015} where only finite intensity measures are considered.
\smallskip

The asymptotic exponents in the upper tail bounds for both the expectation and the median are equal to $1/k$. This is actually best possible as will be pointed out later. Note also that the asymptotic exponent of the estimates in previous results from \cite{RST_2013, LR_2015} are $1/k$ for both the upper and the lower tail which is compatible with our upper tail bounds but worse than our lower tail inequalities.
\smallskip

The essential tools for proving our concentration inequalities are new results on Poisson U-statistics. Given a (simple) Poisson point process $\eta$ on $\R^d$, denote by $\eta_{\neq}^k$ the set of all $k$-tuples of distinct elements of $\eta$. A \emph{Poisson U-statistic} of order $k$ is a random variable that can be written as 
$$\sum_{ \xb\in\eta_{\neq}^k} f( \xb), $$ where $f:(\R^d)^k\to\R$ is a symmetric measurable map. The methods that we use in the present work to prove concentration of Poisson U-statistics around the mean are based on tools recently developed by Bachmann and Peccati in \cite{BP_2015}. In this work, concentration inequalities for the edge count in random geometric graphs were established. The approach to achieve these estimates will be generalized in the present article to arbitrary subgraph counts. 
\smallskip

In addition to this, we will present a refinement of the method suggested in \cite{RST_2013, LR_2015} by  Lachi{\`e}ze-Rey, Reitzner, Schulte and Th{\"a}le  which gives concentration of Poisson U-statistics around the median. We improve this method further with the particular advantage, apart from giving clean and easy to use tail bounds, that the underlying Poisson process may have infinite intensity measure.
\smallskip

As an application of the concentration inequalities in Theorem \ref{main} we are going to establish strong laws of large numbers for subgraph counts. In \cite[Chapter 3]{P_2003} one can find strong laws for subgraph counts associated to i.i.d. points in $\R^d$. To the best of our knowledge, there are no comparable strong laws so far for the Poisson process case and a particular feature of our results is that they even apply to certain Poisson processes with non-finite intensity measure.
\smallskip

In recent years, the study of random geometric simplicial complexes built on random point sets in $\R^d$ has attracted a considerable attention, see e.g. \cite{K_2011, DFR_2014, KM_2013, YSA_2014} and in particular the survey \cite{BK_2014} by Bobrowski and Kahle on this topic. Motivations to study these complexes arise particularly from topological data analysis (for a survey on this see \cite{C_2009}), but also from other applications of geometric complexes like sensor networks (see e.g. \cite{SG_2007}). One of the models for random simplicial complexes is the so-called Vietoris-Rips complex. We stress that the number of $k$-dimensional simplices in this complex is exactly given by the number of complete subgraphs on $k$-vertices in the corresponding random geometric graph. So the results in the present paper are directly related to the study of random Vietoris-Rips complexes and may be useful in future research on these models.
\smallskip

Related to our investigations are the findings by  Eichelsbacher, Rai{\v{c}} and Schreiber in \cite{ERS_2015} where deviation inequalities for stabilizing functionals of finite intensity measure Poisson processes were derived. These results are in principle also applicable to U-statistics (and hence to subgraph counts), as it was pointed out in \cite{RST_2013}, although the appearing asymptotic exponents tend to be non-optimal. Moreover, since the constants in the tail bounds from \cite{ERS_2015} depend on the intensity and are not given explicitly, it remains as an interesting open question whether these estimates can be extended to settings with non-finite intensity measures. 
\smallskip

The paper is organized as follows. In Section 2 we will introduce the general assumptions and definitions that form the framework for the entire work. As anticipated, we will prove the announced concentration inequalities not only for subgraph counts, but for local Poisson U-statistics that satisfy certain additional assumptions. The tail estimates for subgraph counts will then follow directly from the more general results. In Section 3 we will present these general concentration inequalities. The announced applications to subgraph counts in random geometric graphs are presented in Section 4. Here, after pointing out how the general results apply to the setting of subgraph counts, we will analyse the asymptotic behaviour of expectation, median and variance which is needed for the subsequent presentation of the strong laws of large numbers. The proofs of all statements are gathered in Section 5.

\section{Framework} \label{s:framework}

For the remainder of the article we denote by $\Nb$ the space of locally finite point configurations in $\R^d$. Elements in $\Nb$ can be regarded as locally finite subsets of $\R^d$, but also as locally finite simple point measures on $\R^d$. In this spirit, for $\xi\in\Nb$, we will use set related notations like for example $\xi\cap A$ as well as measure related notations like $\xi(A)$. Moreover, we denote by $\Nc$ the $\sigma$-algebra over $\Nb$ that is generated by the maps
\begin{align*}
\Nb\to\N\cup\{\infty\}, \xi \mapsto \xi(A),
\end{align*}
where $A$ ranges over all Borel subsets of $\R^d$. Throughout, we will consider a (non-trivial) Poisson point process $\eta $ on $\R^d$. Then $\eta $ is a random element in $\Nb$ and its intensity measure $\mu$ is the measure on $\R^d$ defined by $\mu(A) = \E\eta (A)$ for any Borel set $A\subseteq\R^d$. It will be assumed that $\mu$ is locally finite and does not have atoms. A \emph{(Poisson) U-statistic of order $k$} is a functional $F:\Nb\to\R\cup\{\pm\infty\}$ that can be written as
\begin{align*}
F(\xi) = \sum_{ \xb\in\xi_{\neq}^k} f( \xb),
\end{align*}
where $f:(\R^d)^k \to \R$ is a symmetric measurable map called the \emph{kernel} of $F$, and for any $\xi\in\Nb$,
\begin{align*}
\xi_{\neq}^k = \{(x_1,\ldots,x_k)\in\xi^k : x_i\neq x_j \ \text{whenever} \ i\neq j\}.
\end{align*}
The map $F$ and the corresponding random variable $F(\eta )$ will be identified in what follows if there is no risk of ambiguity. Note that in this work we will only consider U-statistics $F$ with non-negative kernels. Also, it will be assumed throughout that $\eta$ guarantees almost surely $F(\eta)<\infty$ if not stated otherwise.

Motivated by the application to random geometric graphs, we will define further properties that a U-statistic $F$ with kernel  $f\geq 0$ might satisfy. We will refer to these properties in the remainder of the article.
\begin{itemize}
\item [(K1)] There is a constant $\rho_F>0$ such that 
\begin{align*}
f(x_1,\ldots,x_k) > 0 \ \text{whenever} \ \diam(x_1,\ldots,x_k) \leq \rho_F.
\end{align*} 

\item [(K2)] There is a constant $\Theta_F\geq 1$ such that 
\begin{align*}
f(x_1,\ldots,x_k)=0 \ \text{whenever} \ \diam(x_1,\ldots,x_k) > \Theta_F\rho_F.
\end{align*}

\item [(K3)] There are constants $M_F \geq m_F > 0$ such that
\begin{align*}
M_F\geq f(x_1,\ldots,x_k) \geq m_F \ \text{whenever} \ f(x_1,\ldots,x_k)>0.
\end{align*}
\end{itemize}

U-Statistics that satisfy property (K2) are also referred to as \emph{local} U-Statistics. Property (K3) is particularly satisfied for U-Statistics that count occurrences of certain subconfigurations.
\medskip

\section{General Results}
The concentration inequalities for U-statistics we are about to present are based on methods developed in \cite{RST_2013, LR_2015} and \cite{BP_2015}. We begin by citing the results that we use from these articles. To do so, we first need to introduce a further notion. The \emph{local version} of a U-statistic $F$ is defined for any $\xi\in\Nb$ and $x\in\xi$ by
\begin{align*}
 F(x,\xi) = \sum_{\yb\in (\xi\setminus x)^{k-1}_{\neq}} f(x,\yb),
\end{align*}
where $\xi\setminus x$ is shorthand for $\xi\setminus\{x\}$. Note, that
\begin{align*}
 F(\xi) = \sum_{x\in\xi}F(x,\xi).
\end{align*}

The upcoming result uses a notion introduced in \cite{BP_2015}: A U-statistic $F$ is called \emph{well-behaved} if there is a measurable set $B\subseteq \Nb$ with $\P(\eta\in B)=1$, such that
\begin{enumerate}
\item $F(\xi)<\infty$ for all $\xi\in B$,
\item $\xi\cup\{x\}\in B$ whenever $\xi\in B$ and $x\in\R^d$,
\item $\xi\setminus \{x\}\in B$ whenever $\xi\in B$ and $x\in\xi$.
\end{enumerate}
Roughly speaking, the motivation for this notion is that for a well-behaved \linebreak U-statistic $F$ one has almost surely $F(\eta)<\infty$ as well as $F(\eta\cup\{x\})<\infty$ for $x\in\R^d$ and $F(\eta\setminus\{x\})<\infty$ for $x\in\eta$. This ensures that the local versions $F(x,\eta)$ for $x\in\eta$ and also $F(x, \eta \cup\{x\})$ for $x\in\R^d$ are almost surely finite, thus preventing technical problems arising from non-finiteness of these quantities. The following theorem combines \cite[Corollary 5.2]{BP_2015} and \cite[Corollary 5.3]{BP_2015} and the fact that by \linebreak \cite[Lemma 3.4]{Ka_2002} the exponential decay of the upper tail implies existence of all moments of $F$. 

\begin{thm} \label{generalUpperThm}
Let $F$ be a well-behaved U-statistic of order $k$ with kernel $f\geq 0$. Assume that for some $\alpha\in[0,2)$ and $c>0$ we have almost surely
\begin{align} \label{condUstat}
\sum_{x\in\eta }F(x,\eta )^2 \leq c F^\alpha.
\end{align}
Then all moments of $F$ exist and for all $r\geq 0$,
\begin{align*}
\P(F\geq \E F + r) \leq \exp\left(-\frac{((\E F + r)^{1-\alpha/2}-(\E F)^{1-\alpha/2})^2}{2k^2c}\right)
\end{align*}
and 
\begin{align*}
\P(F\leq \E F - r) \leq \exp\left(-\frac{r^2}{2k\V F}\right)  .
\end{align*}
\end{thm}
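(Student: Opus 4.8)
The plan is to obtain both tail bounds directly from the general concentration estimates for Poisson U-statistics in \cite{BP_2015}, and then to upgrade the upper tail bound to the existence of all moments via a standard integrability lemma. The only genuine work is to rewrite hypothesis \eqref{condUstat} in the form required by \cite[Corollary 5.2]{BP_2015} and \cite[Corollary 5.3]{BP_2015}; once that is done the statement is essentially a citation.

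First I would compute the effect of deleting a point. Fix a configuration $\xi\in\Nb$ and a point $x\in\xi$. The tuples in $\xi_{\neq}^k$ not already counted in $F(\xi\setminus x)$ are exactly those having $x$ in one of their $k$ coordinates; selecting that coordinate, filling the remaining $k-1$ coordinates with distinct points of $\xi\setminus x$, and using the symmetry of $f$ yields
\begin{align*}
F(\xi) - F(\xi\setminus x) \;=\; k\sum_{\yb\in(\xi\setminus x)_{\neq}^{k-1}} f(x,\yb) \;=\; k\, F(x,\xi).
\end{align*}
Since $f\geq 0$ these increments are nonnegative, so $F$ is monotone under point insertion, and since $F$ is well-behaved all the quantities above are almost surely finite on a fixed set of full measure. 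Squaring and summing over $x\in\eta$ converts \eqref{condUstat} into
\begin{align*}
\sum_{x\in\eta}\bigl(F(\eta) - F(\eta\setminus x)\bigr)^2 \;=\; k^2\sum_{x\in\eta} F(x,\eta)^2 \;\leq\; k^2 c\, F^\alpha \qquad\text{almost surely.}
\end{align*}

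This last bound, together with well-behavedness and the monotonicity just observed, is precisely the hypothesis of \cite[Corollary 5.2]{BP_2015} (for the upper tail) and of \cite[Corollary 5.3]{BP_2015} (for the lower tail), now with the constant $k^2 c$ and the same exponent $\alpha\in[0,2)$. Invoking these two corollaries gives, respectively,
\begin{align*}
\P(F\geq \E F + r) &\leq \exp\!\left(-\frac{\bigl((\E F+r)^{1-\alpha/2}-(\E F)^{1-\alpha/2}\bigr)^2}{2k^2 c}\right), \\
\P(F\leq \E F - r) &\leq \exp\!\left(-\frac{r^2}{2k\V F}\right),
\end{align*}
which are the two asserted inequalities; I would also check (as should be contained in, or immediate from, \cite[Corollary 5.2]{BP_2015}) that these hypotheses force $\E F<\infty$, so that centring at $\E F$ is legitimate.

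Finally, for the moments: since $\alpha<2$ the exponent $1-\alpha/2$ is strictly positive, so the upper tail estimate shows that $\P(F\geq t)$ decays at least like a stretched exponential $\exp(-a\,t^{1-\alpha/2})$ as $t\to\infty$ for a suitable $a>0$. As $F\geq 0$, this is exactly the hypothesis of \cite[Lemma 3.4]{Ka_2002}, which then yields $\E F^p<\infty$ for every $p\geq 1$. The main obstacle in this scheme is the bookkeeping of the first step — correctly identifying the one-point deletion cost as $k\,F(x,\xi)$ so that the factor $k^2$, and hence the constant $2k^2c$ in the exponent, is produced — together with confirming that the integrability of $F$ needed to speak of $\E F$ and $\V F$ is delivered by the cited corollaries; beyond that the argument is a direct appeal to known results.
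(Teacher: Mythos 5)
Your proposal is correct and is essentially identical to the paper's treatment: the theorem is obtained by directly combining \cite[Corollary 5.2]{BP_2015} and \cite[Corollary 5.3]{BP_2015} (your preliminary identity $F(\xi)-F(\xi\setminus x)=k\,F(x,\xi)$ is exactly the bookkeeping hidden inside those corollaries) and then invoking \cite[Lemma 3.4]{Ka_2002} to convert the exponentially decaying upper tail into existence of all moments. The only slip is cosmetic: the upper tail actually decays like $\exp(-a\,t^{2-\alpha})$ rather than $\exp(-a\,t^{1-\alpha/2})$, but since both exponents are positive for $\alpha<2$ this does not affect the moment argument.
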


Note that condition (K2), as defined in Section \ref{s:framework}, clearly ensures that all U-statistics considered in the present paper are well-behaved in the sense described above.

In addition to the latter result, a variation of the approach presented in \cite{RST_2013, LR_2015} gives that for finite intensity measure Poisson processes, the condition (\ref{condUstat}) also implies a concentration inequality for the median instead of the expectation of the considered U-statistic. Moreover, it is possible to extend these tail estimates to U-statistics built over non-finite intensity measure processes, resulting in the forthcoming theorem. To state this result, we first need to introduce a further notation. For any real random variable $Z$, we denote by $\M Z$ the smallest median of $Z$, i.e.
\begin{align} \label{MedDef}
\M Z = \inf\{x\in\R: \P(Z\leq x)\geq 1/2\}.
\end{align}
Note that $\M Z$ is exactly the value which the quantile-function of $Z$ takes at $1/2$. We are well prepared to state the announced result.

\begin{thm} \label{thm2}
Let $F$ be a U-statistic of order $k$ with kernel $f\geq 0$. Assume that $F$ is almost surely finite and satisfies (\ref{condUstat}) for some $\alpha\in[0,2)$ and $c>0$. Then for all $r\geq 0$,
\begin{align*}
\P(F> \M F+r) &\leq 2 \exp\left(-\frac{r^2}{4k^2c(r+\M F)^{\alpha}}\right),\\
\P(F< \M F-r) &\leq 2 \exp\left(-\frac{r^2}{4k^2c(\M F)^{\alpha}}\right). 
\end{align*}
\end{thm}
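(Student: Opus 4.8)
The plan is to obtain both tail bounds from a single tool, the \emph{convex distance concentration inequality} for a Poisson process $\eta$: for every measurable $A\subseteq\Nb$ with $\P(\eta\in A)>0$ one has $\E\exp(\tfrac14 d_T(\eta,A)^2)\le\P(\eta\in A)^{-1}$, where $d_T(\xi,A)=\sup_u\inf_{\xi'\in A}\sum_{x\in\xi\setminus\xi'}u(x)$, the supremum running over weights $u\ge0$ with $\sum_{x\in\xi}u(x)^2\le1$ (so that deleting a point $x$ from $\xi$ costs $u(x)$, and adding points, which we will never need, only costs more). The finite-intensity version of this inequality is the one used in \cite{RST_2013, LR_2015}; the form we need allows $\mu$ to be non-finite, and that generalisation is obtained by approximating $\eta$ through the finite Poisson processes $\eta\cap W_n$ on an exhausting sequence $W_n\uparrow\R^d$, using that $f\ge0$ forces $F(\eta\cap W_n)\uparrow F(\eta)$ and, by a short monotone-limit argument, $\M F(\eta\cap W_n)\uparrow\M F(\eta)$.

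The deterministic heart of the matter is the following estimate: if $f\ge0$ and $F(\xi)>b\ge0$, then
\[
d_T(\xi,\{F\le b\})\ \ge\ \frac{F(\xi)-b}{k\big(\sum_{x\in\xi}F(x,\xi)^2\big)^{1/2}}.
\]
To prove it, first observe that deleting one point changes $F$ by $F(\xi)-F(\xi\setminus x)=kF(x,\xi)$ (each $k$-tuple through $x$ is counted once for each of the $k$ slots $x$ may occupy, $f$ being symmetric), so by a union bound $F(\xi)-F(\xi\setminus S)\le k\sum_{x\in S}F(x,\xi)$ for any finite $S\subseteq\xi$. Since $f\ge0$ makes $F$ monotone under inclusion, any $\xi'$ with $F(\xi')\le b$ satisfies $F(\xi\setminus(\xi\setminus\xi'))\le F(\xi')\le b$, hence $\sum_{x\in\xi\setminus\xi'}F(x,\xi)\ge(F(\xi)-b)/k$; testing with the admissible weight $u(x)=F(x,\xi)\big(\sum_y F(y,\xi)^2\big)^{-1/2}$ (well defined because $F(\xi)=\sum_{x\in\xi}F(x,\xi)>0$) gives the bound. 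Feeding in hypothesis (\ref{condUstat}), applied pathwise on the full-probability event where it holds, upgrades this to $d_T(\xi,\{F\le b\})\ge(F(\xi)-b)\big(k\sqrt c\,F(\xi)^{\alpha/2}\big)^{-1}$.

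\emph{Upper tail.} Take $A=\{F\le\M F\}$, so $\P(\eta\in A)\ge\tfrac12$ by definition of the smallest median. On $\{F>\M F+r\}$ the estimate above gives $d_T(\eta,A)\ge(k\sqrt c)^{-1}(F(\eta)-\M F)F(\eta)^{-\alpha/2}$, and since $v\mapsto(v-\M F)v^{-\alpha/2}$ is nondecreasing on $[\M F,\infty)$ when $\alpha\in[0,2)$, this is $\ge\lambda:=r\big(k\sqrt c\,(\M F+r)^{\alpha/2}\big)^{-1}$. Thus $\{F>\M F+r\}\subseteq\{d_T(\eta,A)\ge\lambda\}$, and Markov's inequality for $\exp(\tfrac14 d_T(\eta,A)^2)$ together with the convex distance inequality yields $\P(F>\M F+r)\le e^{-\lambda^2/4}\P(\eta\in A)^{-1}\le2e^{-\lambda^2/4}$, which is the first bound. \emph{Lower tail.} The claim is trivial if $r\ge\M F$ (as $F\ge0$), so assume $0<r<\M F$ and put $A=\{F<\M F-r\}$. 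Because $\M F$ is the \emph{smallest} median, $\P(F<\M F)\le\tfrac12$, hence $\P(F\ge\M F)\ge\tfrac12$; and on $\{F\ge\M F\}$ the estimate with $b=\M F-r$ gives, by the same monotonicity applied to $v\mapsto(v-\M F+r)v^{-\alpha/2}$, $d_T(\eta,A)\ge\lambda':=r\big(k\sqrt c\,\M F^{\alpha/2}\big)^{-1}$. Therefore $\tfrac12\le\P(F\ge\M F)\le\P(d_T(\eta,A)\ge\lambda')\le e^{-\lambda'^2/4}\P(\eta\in A)^{-1}$, which rearranges to $\P(F<\M F-r)=\P(\eta\in A)\le2e^{-\lambda'^2/4}$, the second bound (the case $\P(\eta\in A)=0$ being trivial).

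I expect the real obstacle to be the first step: making the convex distance concentration inequality available when $\mu$ is not finite, since the standard Poisson version assumes finite intensity, and the passage to the limit along $W_n\uparrow\R^d$ has to be carried out while simultaneously controlling $d_T(\eta\cap W_n,\cdot)$, the relevant level sets of $F(\,\cdot\cap W_n)$, and their probabilities. Everything else — the geometric estimate and the one-variable optimisation over the level $F(\eta)$ via the elementary monotonicity of $v\mapsto(v-b)v^{-\alpha/2}$ for $\alpha<2$ — is routine. Note finally that (\ref{condUstat}), since $F<\infty$ a.s., already forces $\sum_{x\in\eta}F(x,\eta)^2<\infty$ a.s., so each $F(x,\eta)$ and the convex distance estimate are a.s. meaningful; in particular no separate well-behavedness hypothesis as in Theorem \ref{generalUpperThm} is needed here.
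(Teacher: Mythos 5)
Your proof is correct and follows the paper's argument essentially verbatim: the same admissible weight $u_\xi(x)=F(x,\xi)\bigl(\sum_y F(y,\xi)^2\bigr)^{-1/2}$, the same deletion bound $F(\xi)\le k\sum_{x\in\xi\setminus\delta}F(x,\xi)+F(\delta)$, and the same monotonicity of $v\mapsto(v-b)v^{-\alpha/2}$ for $\alpha\in[0,2)$. The obstacle you flag at the end is resolved in the paper not by extending the convex-distance inequality itself to infinite intensity, but exactly along the lines you sketch: one first proves the full tail bounds for the finite-intensity restrictions $F(\eta\cap B(0,n))$, chooses medians $\mm_n\le\M F$ with $\mm_n\to\M F$ (your monotone-limit argument for the smallest medians, via Portmanteau, does the job), and then passes to the limit in the resulting probability bounds by Portmanteau again --- so no control of $d_T(\eta\cap W_n,\cdot)$ in the limit is ever required.
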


In the light of the above results, a natural approach towards concentration inequalities for U-statistics is to establish condition (\ref{condUstat}). For U-statistics satisfying the conditions (K1) to (K3) we obtain the following.

\begin{thm} \label{lem2}
Let $F$ be a U-statistic of order $k$ with kernel $f\geq 0$ that satisfies (K1) to (K3). Then for any $\xi\in\Nb$ we have
\begin{align} \label{condUStat2}
 \sum_{x\in \xi} F(x,\xi)^2 \leq c_dF(\xi)^{\tfrac{2k-1}{k}},
\end{align}
where
\[
c_d = \left(2\left\lceil \Theta_F\sqrt{d}\right\rceil+1\right)^{2d(k-1)}\cdot M_F^2 \left(\frac{k^k}{m_Fk!}\right)^{\tfrac{2k-1}{k}}.
\]
\end{thm}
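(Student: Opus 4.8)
The plan is to derive \eqref{condUStat2} from two elementary deterministic consequences of (K1)--(K3): a lower bound for $F(\xi)$ produced by the $k$-tuples lying inside a single small cube, and an upper bound for the local versions $F(x,\xi)$ coming from the fact that a positive kernel value confines all of its arguments to a bounded ball. First I would reduce to the case $0<F(\xi)<\infty$: if $F(\xi)=0$ then, since $F(\xi)=\sum_{x\in\xi}F(x,\xi)$ and $f\ge0$, every $F(x,\xi)$ vanishes and \eqref{condUStat2} is trivial, and if $F(\xi)=\infty$ it is trivial as well. Then I would tile $\R^d$ by the half-open axis-parallel cubes of side length $\rho_F/\sqrt d$ (so that every such cube has diameter $\rho_F$), writing $n_Q=\xi(Q)$ for a cube $Q$ of this grid.

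For the lower bound: every ordered $k$-tuple of distinct points of $\xi$ contained in one common cube $Q$ has diameter at most $\rho_F$, hence by (K1) and (K3) contributes at least $m_F$ to $F(\xi)=\sum_{\xb\in\xi^k_{\ne}}f(\xb)$; as these tuples are distinct for distinct cubes,
\begin{align*}
F(\xi)\ \ge\ m_F\sum_Q n_Q(n_Q-1)\cdots(n_Q-k+1).
\end{align*}
In particular $F(\xi)\ge m_F\,n_Q(n_Q-1)\cdots(n_Q-k+1)$ for each fixed cube $Q$, and, counting the $k!$ orderings of one nonzero unordered tuple, $F(\xi)\ge k!\,m_F$. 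For the upper bound: by (K2) every term $f(x,\yb)$ occurring in $F(x,\xi)$ has all entries of $\yb$ inside the closed ball $\overline B(x,\Theta_F\rho_F)$, and this ball is covered by at most $N_0:=(2\lceil\Theta_F\sqrt d\rceil+1)^d$ of the grid cubes (a ball of radius $\Theta_F\rho_F$ sits in an axis-parallel cube of side $2\Theta_F\rho_F$, which meets at most $\lceil2\Theta_F\sqrt d\rceil+1\le2\lceil\Theta_F\sqrt d\rceil+1$ grid intervals along each axis); with (K3) this yields $F(x,\xi)\le M_F\,\ell_x^{\,k-1}$, where $\ell_x:=\xi(\overline B(x,\Theta_F\rho_F))<\infty$.

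Next, for fixed $x\in\xi$ with $F(x,\xi)>0$ there is a covering cube $Q^\ast$ of $\overline B(x,\Theta_F\rho_F)$ with $n_{Q^\ast}\ge\ell_x/N_0$. If $\ell_x\ge kN_0$ then $n_{Q^\ast}\ge k$, so by the elementary bound $n(n-1)\cdots(n-k+1)\ge(k!/k^k)n^k$ for integers $n\ge k$ together with the lower estimate above one gets $m_F(k!/k^k)(\ell_x/N_0)^k\le F(\xi)$; if instead $\ell_x<kN_0$, then $F(x,\xi)\le M_F\ell_x^{\,k-1}<M_F(kN_0)^{k-1}$ while $F(\xi)\ge k!\,m_F$. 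In either case a short manipulation gives
\begin{align*}
F(x,\xi)\ \le\ M_F\,(kN_0)^{k-1}(k!\,m_F)^{-(k-1)/k}\,F(\xi)^{(k-1)/k},
\end{align*}
and the constant on the right equals $M_F N_0^{\,k-1}(k^k/(m_Fk!))^{(k-1)/k}$, which is $N_0^{-(k-1)}(m_F/M_F)(k!/k^k)\,c_d$ and hence at most $c_d$ since $M_F\ge m_F$, $N_0\ge1$, $k!\le k^k$; the bound holds trivially when $F(x,\xi)=0$. Finally, since $F(\xi)<\infty$ the sum $F(\xi)=\sum_{x\in\xi}F(x,\xi)$ has only finitely many (finite) nonzero terms, so $\max_{x\in\xi}F(x,\xi)$ is a genuine maximum and
\begin{align*}
\sum_{x\in\xi}F(x,\xi)^2\ \le\ \Bigl(\max_{x\in\xi}F(x,\xi)\Bigr)\sum_{x\in\xi}F(x,\xi)\ \le\ c_d\,F(\xi)^{(k-1)/k}\cdot F(\xi)\ =\ c_d\,F(\xi)^{(2k-1)/k},
\end{align*}
which is \eqref{condUStat2}.

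The step I expect to cost the most effort is the constant bookkeeping in the third paragraph: the geometric choices (the mesh $\rho_F/\sqrt d$ and the covering number $N_0$) and the combinatorial inequalities have to be arranged so that the final constant is genuinely at most the prescribed $c_d$. In particular, estimating $\sum_x F(x,\xi)^2$ through $\bigl(\max_x F(x,\xi)\bigr)F(\xi)$ rather than term by term, and invoking the sharper lower bound $F(\xi)\ge k!\,m_F$ (not merely $F(\xi)\ge m_F$) to deal with sparse configurations, both seem to be needed in order to land on exactly $c_d$.
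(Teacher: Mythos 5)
Your argument is correct, and it takes a genuinely different route from the paper's. The paper also tiles $\R^d$ by cubes of diameter $\rho_F$, but it then bounds each $F(x,\xi)$ through the occupation numbers of the $q=(2\lceil\sqrt d\,\Theta_F\rceil+1)^d$ neighbouring cubes, reduces the resulting triple sum to $\sum_i n_i^{2k-1}$ by means of a rearrangement inequality for permutations (Lemma \ref{lem1}, itself proved via Lemma \ref{lem3} and a double induction), and finishes with the monotonicity of the $p$-norms together with the lower bound $\sum_i n_i^k\le \tfrac{k^{k-1}}{m_F(k-1)!}\sum_x F(x,\xi)$. You bypass the rearrangement lemma entirely: the elementary estimate $\sum_x F(x,\xi)^2\le\bigl(\sup_x F(x,\xi)\bigr)\,F(\xi)$ reduces the whole theorem to the single pointwise bound $F(x,\xi)\le C\,F(\xi)^{(k-1)/k}$, which you obtain from (K2)--(K3) via $F(x,\xi)\le M_F\ell_x^{k-1}$ and a pigeonhole argument locating one cube with at least $\ell_x/N_0$ points, whence (K1), (K3) and the falling-factorial inequality $n(n-1)\cdots(n-k+1)\ge (k!/k^k)n^k$ for $n\ge k$ give a matching lower bound on $F(\xi)$; the sparse case $\ell_x<kN_0$ is correctly absorbed using $F(\xi)\ge k!\,m_F$, which follows from (K3) and the symmetry of $f$. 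I checked the constant bookkeeping: your constant is $C=N_0^{-(k-1)}(m_F/M_F)(k!/k^k)\,c_d\le c_d$, so you in fact land strictly below the paper's $c_d$ in general. What the two approaches buy: yours is shorter, more elementary, and gives a (slightly) better constant; the paper's term-by-term control via all neighbouring cubes and the permutation lemma is combinatorially heavier but produces intermediate inequalities (notably Lemma \ref{lem1}) of possible independent use, and tracks the local geometry more finely than the single densest cube — a refinement that, for this particular statement, turns out to be unnecessary.
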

The above statement guarantees that Theorem \ref{generalUpperThm} and Theorem \ref{thm2} apply to any almost surely finite U-statistic $F$ that satisfies (K1) to (K3). Thus, the following tail estimates are established.

\pagebreak

\begin{cor} \label{concUstat}
Let $F$ be a U-statistic of order $k$ with kernel $f\geq 0$. Assume that $F$ is almost surely finite and satisfies (K1) to (K3). Then all moments of $F$ exist and for all $r\geq 0$,
\begin{align*}
\P(F\geq \E F + r) &\leq \exp\left(-\frac{((\E F + r)^{1/(2k)}-(\E F)^{1/(2k)})^2}{2k^2c_d}\right),\\
\P(F\leq \E F - r) &\leq \exp\left(-\frac{r^2}{2k\V F}\right),\\
\P(F> \M F+r) &\leq 2 \exp\left(-\frac{r^2}{4k^2c_d(r+\M F)^{2-1/k}}\right),\\
\P(F< \M F-r) &\leq 2 \exp\left(-\frac{r^2}{4k^2c_d(\M F)^{2-1/k}}\right),
\end{align*}
where $c_d$ is defined as in Theorem \ref{lem2}.
\end{cor}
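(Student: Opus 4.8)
The plan is to obtain Corollary~\ref{concUstat} as a direct specialization of the three preceding results. Set $\alpha := \tfrac{2k-1}{k} = 2 - \tfrac1k$, so that $\alpha \in [1,2) \subseteq [0,2)$ for every $k \geq 1$. By Theorem~\ref{lem2}, the inequality $\sum_{x\in\xi} F(x,\xi)^2 \leq c_d\, F(\xi)^{\alpha}$ holds for \emph{every} $\xi \in \Nb$, in particular $\eta$-almost surely; this is precisely condition (\ref{condUstat}) with $c = c_d$. Thus both general theorems become applicable, provided their remaining hypotheses are met.

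For Theorem~\ref{thm2} nothing more is needed: $F$ is assumed almost surely finite. For Theorem~\ref{generalUpperThm} one additionally needs $F$ to be well-behaved, and, as noted immediately after that theorem, property (K2) guarantees this --- the diameter cut-off at $\Theta_F \rho_F$ forces $F(\xi) < \infty$ for all locally finite $\xi$, and the collection of locally finite configurations is trivially closed under adding or deleting a single point. Hence one may invoke Theorem~\ref{generalUpperThm} with $c = c_d$ and $\alpha = 2 - \tfrac1k$, which in particular yields that all moments of $F$ exist (this last part of the conclusion being the cited consequence of \cite[Lemma 3.4]{Ka_2002}).

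It then remains only to substitute. In Theorem~\ref{generalUpperThm} the exponent $1 - \alpha/2$ equals $1 - \tfrac12\bigl(2 - \tfrac1k\bigr) = \tfrac1{2k}$, which reproduces the first displayed bound of the corollary; the second bound is copied verbatim. In Theorem~\ref{thm2} the exponent $\alpha$ appearing in the two denominators is $2 - \tfrac1k$, which gives the third and fourth bounds. I do not expect any genuine obstacle here, since the corollary is a pure plug-in; the only steps deserving a word of justification are the elementary identity $1 - \alpha/2 = \tfrac1{2k}$ and the verification, via (K2), that $F$ is well-behaved so that Theorem~\ref{generalUpperThm} is at our disposal.
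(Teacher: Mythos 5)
Your overall route is exactly the paper's: Theorem~\ref{lem2} supplies condition (\ref{condUstat}) with $c=c_d$ and $\alpha=\tfrac{2k-1}{k}=2-\tfrac1k$, and then Theorems~\ref{generalUpperThm} and~\ref{thm2} are applied verbatim, with $1-\alpha/2=\tfrac1{2k}$ giving the first bound. One intermediate claim in your verification of well-behavedness is, however, false as stated: the cut-off (K2) does \emph{not} force $F(\xi)<\infty$ for every locally finite $\xi$. A locally finite configuration may contain infinitely many well-separated clusters of $k$ points each of diameter at most $\rho_F$, and then (K1) and (K3) force $F(\xi)\geq m_F k!\cdot\infty=\infty$. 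The correct argument takes $B=\{\xi\in\Nb: F(\xi)<\infty\}$, which has full measure by the almost sure finiteness hypothesis; it is closed under deletion of a point because $f\geq 0$, and closed under adjoining a point $x$ because, by (K2) and local finiteness of $\xi$, only the finitely many points of $\xi$ within distance $\Theta_F\rho_F$ of $x$ can contribute new nonzero terms, so $F(\xi\cup\{x\})-F(\xi)$ is a finite sum of finite values. With that repair the proof is complete and coincides with the paper's.
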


We conclude this section with a brief discussion about optimality of the upper tail concentration inequalities that were established above. Consider a U-statistic $F$ such that the assumptions of Corollary \ref{concUstat} hold. Assume that
\begin{align}\label{tailCond}
\P(F\geq M + r) \leq \exp(-I(r)),
\end{align}
where $M$ is either the mean or a median of $F$ and $I$ is a function that satisfies
\begin{align}\label{ICond}
\liminf_{r\to\infty} I(r)/r^a > 0
\end{align}
for some $a>0$. The upper tail estimates from Corollary \ref{concUstat} yield such functions $I$ for the exponent $a=1/k$. The next result states that this is optimal.
\begin{prop}\label{optProp}
Let $F$ be a U-statistic of order $k$ with positive kernel $f\geq 0$ such that $F$ satisfies (K1) to (K3). Let $I$ and $a>0$ be such that (\ref{tailCond}) and (\ref{ICond}) are verified. Then $a\leq 1/k$.
\end{prop}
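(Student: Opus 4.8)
The plan is to complement the upper bound of Corollary~\ref{concUstat} with a \emph{lower} bound on the upper tail, obtained by planting a dense cluster of points. First I would fix a ball $B^\ast = B(z,\rho_F/2)$ with $q := \mu(B^\ast)\in(0,\infty)$. Such a ball exists: since $\eta$ is non-trivial, $\mu$ assigns positive mass to some bounded Borel set; this set is covered by finitely many balls of radius $\rho_F/2$, so at least one of them carries positive $\mu$-mass, which is finite by local finiteness of $\mu$. As $\eta(B^\ast)$ is Poisson distributed with parameter $q$, we have $\P(\eta(B^\ast)=m)=e^{-q}q^m/m!$ for every $m\in\N$.

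Next I would convert a large value of $\eta(B^\ast)$ into a large value of $F$. If $\eta(B^\ast)\geq m$, then every $k$-tuple of distinct points of $\eta$ lying in $B^\ast$ has diameter at most $\rho_F$, hence by (K1) and (K3) its kernel value is at least $m_F$; since $f\geq 0$ this yields $F(\eta)\geq m_F\,m(m-1)\cdots(m-k+1)$. Given $r\geq 0$, let $m=m(r)$ be the smallest integer with $m_F\,m(m-1)\cdots(m-k+1)\geq M+r$. Here $M\geq 0$ in both cases (the mean of the nonnegative variable $F$ and its smallest median are nonnegative), and $\E F<\infty$ by Corollary~\ref{concUstat}, so $m(r)$ is well defined and $m(r)\leq C_1(1+r)^{1/k}$ for a constant $C_1=C_1(k,m_F,M)$. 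Therefore
\[
 \P(F\geq M+r)\;\geq\;\P\bigl(\eta(B^\ast)\geq m(r)\bigr)\;\geq\;\P\bigl(\eta(B^\ast)=m(r)\bigr)\;=\;e^{-q}\,\frac{q^{m(r)}}{m(r)!}.
\]

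I would then take logarithms and apply Stirling's formula, $\log(q^m/m!)=-m\log m+m(1+\log q)-\tfrac12\log(2\pi m)+o(1)$, so that $-\log\bigl(e^{-q}q^m/m!\bigr)\leq 2m\log m$ for all sufficiently large $m$. Combined with $m(r)\leq C_1(1+r)^{1/k}$ this gives, for all large $r$,
\[
 I(r)\;\leq\;-\log\P(F\geq M+r)\;\leq\;2\,m(r)\log m(r)\;\leq\;C_2\,r^{1/k}\log r
\]
for some constant $C_2$ (in particular the exponent $1/k$ in Corollary~\ref{concUstat} is sharp up to the logarithmic factor). Finally, suppose toward a contradiction that $a>1/k$. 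By~(\ref{ICond}) there are $\delta>0$ and $r_0$ with $I(r)\geq\delta r^a$ for $r\geq r_0$, whence $\delta r^{a-1/k}\leq C_2\log r$ for all large $r$; as $a-1/k>0$, the left-hand side grows polynomially while the right-hand side grows only logarithmically, a contradiction. Hence $a\leq 1/k$.

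The argument is the classical clustering lower bound for subgraph counts, so I do not expect a genuine obstacle. The only points requiring care are the existence of the ball $B^\ast$ with $0<\mu(B^\ast)<\infty$, handled by the covering argument above, and checking that the planted cardinality $m(r)$ has order $r^{1/k}$, which is immediate once $\E F$ is known to be finite.
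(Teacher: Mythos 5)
Your proof is correct and follows essentially the same route as the paper's: both lower-bound the upper tail by the event that the Poisson number $Z$ of points in a ball of radius $\rho_F/2$ is large, so that (K1) and (K3) force $F\geq m_F\,Z(Z-1)\cdots(Z-k+1)$, and then compare the resulting $\exp(-C r^{1/k}\log r)$ lower bound on $\P(F\geq M+r)$ with the assumed decay $\exp(-I(r))$ to rule out $a>1/k$. The only cosmetic difference is that you bound $\P(Z=m)$ directly via Stirling, whereas the paper invokes the standard Poisson tail asymptotic $\P(Z\geq s)\sim\exp(-s\log(s/q)-q)$.
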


\section{Subgraph Counts in Random Geometric Graphs}\label{s:SCRGG}

In the following, we are going to investigate a model for random geometric graphs which was particularly investigated in \cite{LP_2013_1} and \cite{LP_2013_2}. Let $S\subseteq \R^d$ be such that $S=-S$. To any countable subset $\xi\subset \R^d$ we assign the \emph{geometric graph} $\mathfrak{G}_S(\xi)$ with vertex set $\xi$ and an edge between two distinct vertices $x$ and $y$ whenever $x-y\in S$. For $ \xb=(x_1,\ldots, x_k)\in(\R^d)^k$ we will occasionally write $\mathfrak{G}_S( \xb)$ instead of $\mathfrak{G}_S(\{x_1,\ldots,x_k\})$. Now, assuming that the vertices are chosen at random according to some Poisson point process $\eta $, we obtain the \emph{random geometric graph} $\mathfrak{G}_S(\eta )$. Denote the closed ball centered at $x\in\R^d$ with radius $\rho\in\R_+$ by $B(x,\rho)$. Throughout, we will assume that $B(0,\rho) \subseteq S \subseteq B(0,\theta\rho)$ for some $\rho>0$ and $\theta\geq 1$. Note that if we take $\theta = 1$, then $S = B(0,\rho)$ and we end up with the classical model of random geometric graphs for the euclidean norm, often referred to as \emph{random disk graph}. Also, the classical geometric graphs based on any other norm in $\R^d$ are covered by the model introduced above. These classical models are extensively described in \cite{P_2003} for the case when the underlying point process $\eta$ has finite intensity measure.

Before proceeding with the discussion, we present a picture that illustrates how the graphs that will be considered in the following might look like (in a window around the origin).
\medskip

\begin{figure}[h]
\includegraphics[scale=0.24]{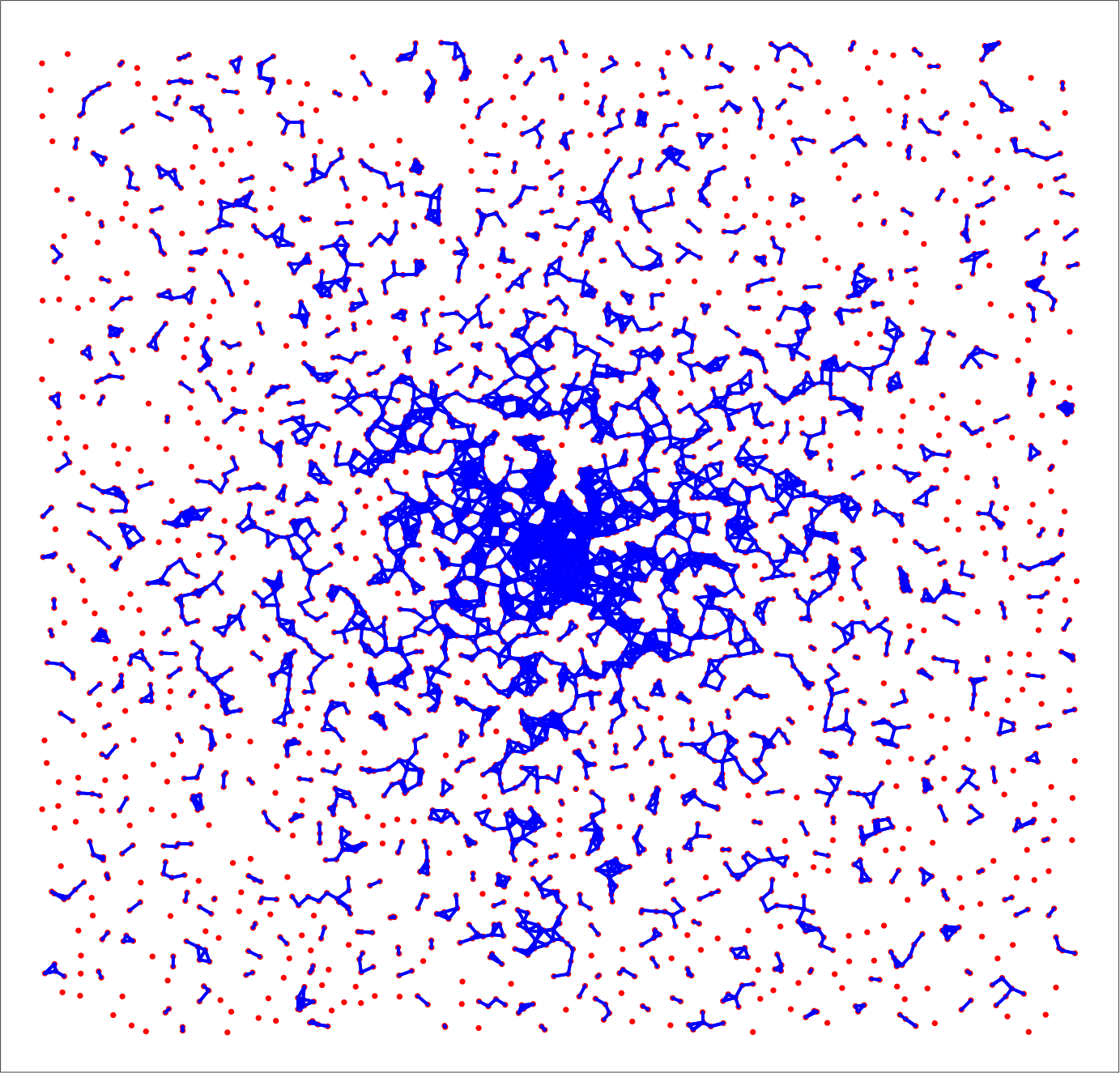}
\caption{Random unit disk graph, intensity measure $\mu=18(\lVert x \rVert+1)^ {-1} dx$}
\label{RGGSim}
\end{figure}

A class of random variables that is frequently studied in the literature on random geometric graphs are the subgraph counts. For any connected graph $H$ on $k$ vertices the corresponding \emph{subgraph count} is the U-statistic
\[
{N^H}(\eta ) = \sum_{ \xb\in \eta ^k_{\neq}} f_H( \xb),
\]
where the kernel $f_H$ is given by
\[
f_H( \xb) = \frac{|\{\text{subgraphs} \  H' \ \text{of} \ \mathfrak{G}_S( \xb): H'\cong H\}|}{k!}.
\]
At this, a subgraph $H'$ of $\mathfrak{G}_S( \xb)$ is a graph on a vertex set which is a subset of $\{x_1,\ldots,x_k\}$ such that any edge of $H'$ is also an edge of $\mathfrak{G}_S( \xb)$. Note that we write $H'\cong H$ if $H'$
and $H$ are isomorphic graphs. So, the random variable ${N^H}(\eta )$ counts the occurrences of $H$ as a subgraph of $\mathfrak{G}_S(\eta )$. We will write $N = N^H$ if $H$ is clear form the context. Note that we do not consider induced subgraphs here. Also note that since we are particularly interested in graphs built on non-finite intensity measure Poisson processes, it is possible that $N$ takes the value $\infty$.

\subsection{Concentration Inequalities for Subgraph Counts}
From the general concentration result Corollary \ref{concUstat} we obtain the deviation inequalities for subgraph counts stated in Theorem \ref{main}. To see this, let $H$ be a connected graph on $k$ vertices and consider the corresponding subgraph count $N^H$. Denote by $a_H$ the number of subgraphs isomorphic to $H$ in a complete graph on $k$ vertices. Moreover, let $\diam(H)$ be the diameter of $H$, i.e. the length of the longest shortest path in $H$. Then $N=N^H$ satisfies the conditions (K1) to (K3) with
\begin{align*}
\rho_N = \rho, \ \ \Theta_N = \diam(H)\theta,\ \ M_N  = \frac{a_H}{k!},\ \ m_N = \frac{1}{k!}.
\end{align*}
Hence, Theorem \ref{main} follows from Corollary \ref{concUstat} where
\begin{align*}
c_d = \left(2\left\lceil\diam(H)\theta\sqrt{d}\right\rceil+1\right)^{2d(k-1)}\cdot \left(\frac{a_H}{k!}\right)^2 k^{2k-1}.
\end{align*}

\subsection{Asymptotic Behaviour of Subgraph Counts}\label{ABSC}
The concentration inequalities presented in our main Theorem \ref{main} depend on expectation, median or variance of the considered subgraph count. It is therefore of interest, how these quantities behave asymptotically in settings where the parameters of the model are being varied. In what comes, we will study these asymptotic behaviours for sequences of subgraph counts with the particular goal of establishing strong laws of large numbers.

For the remainder of this section, we consider a sequence of random geometric graphs $(\mathfrak{G}_{\rho_tS}(\eta _t))_{t\in\N}$. At this, $(\eta _t)_{t\in \N}$ is a sequence of Poisson point processes in $\R^d$ where each $\eta _t$ has intensity measure $\mu_t = t \mu$ and $\mu$ is given by a Lebesgue density $m$. Moreover, $(\rho_t)_{t\in\N}$ is a sequence of positive real numbers such that $\rho_t\to 0$ as $t\to\infty$. In correspondence with the conventions at the beginning of Section \ref{s:SCRGG}, the set $S$ is assumed to satisfy $B(0,1)\subseteq S \subseteq B(0,\theta)$ for some $\theta\geq 1$. Any connected graph $H$ on $k$ vertices now yields a corresponding sequence $N^H_t = N_t$ of subgraph counts in the random graphs $\mathfrak{G}_{\rho_t S}(\eta _t)$.

We are particularly interested in settings where the underlying Poisson point process has non-finite intensity measure. In this situation it is not automatically guaranteed that the considered subgraph counts are almost surely finite. Note (again) that by Theorem \ref{main}, almost sure finiteness of a subgraph count is equivalent to existence of all its moments. The upcoming result gives a sufficient condition for almost sure finiteness of the considered random variables.

\begin{prop}\label{IntegrableProp}
Assume that the Lebesgue density $m$ is bounded. Assume in addition that there exists a constant $c\geq 1$ such that
\begin{align} \label{desityCond}
m(x) \leq c m(y) \ \ \text{whenever} \ \ \lVert x-y \rVert \leq \diam(H)\theta\sup_{t\in\N}\rho_t.
\end{align}
Then, for any $t\in\N$, the random variable $N_t$ is almost surely finite if and only if
\begin{align} \label{integrableCond}
\int_{\R^d} m(x)^k dx < \infty.
\end{align}
\end{prop}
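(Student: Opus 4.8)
\textbf{Proof proposal for Proposition~\ref{IntegrableProp}.}

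The plan is to compute $\E N_t$ explicitly using the multivariate Mecke formula and to show that $\E N_t < \infty$ is equivalent to~(\ref{integrableCond}); then almost sure finiteness will follow in one direction from the expectation bound, and in the other direction I would exploit the local structure of $N_t$ to show that infinite expectation forces $N_t = \infty$ a.s. For the first part, by the Mecke equation,
\begin{align*}
\E N_t = \int_{(\R^d)^k} f_H(\xb)\,\mu_t^k(d\xb) = t^k \int_{(\R^d)^k} f_H(x_1,\dots,x_k)\, m(x_1)\cdots m(x_k)\,dx_1\cdots dx_k,
\end{align*}
where the integration is over the rescaled kernel associated to $\rho_t S$. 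Using property~(K3) for $N_t$ (with $M_{N_t}=a_H/k!$, $m_{N_t}=1/k!$) together with (K1) and (K2), the kernel $f_H$ is bounded above and below by positive constants times the indicator that all $k$ points lie within distance $\diam(H)\theta\rho_t$ of one another. Fixing $x_1$ and integrating out $x_2,\dots,x_{k-1}$ over balls of radius $O(\rho_t)$ contributes a factor of order $\rho_t^{d(k-1)}$, and the density condition~(\ref{desityCond}) lets me replace $m(x_2),\dots,m(x_k)$ by $m(x_1)$ up to the multiplicative constant $c$. This shows $\E N_t \asymp t^k \rho_t^{d(k-1)} \int_{\R^d} m(x)^k\,dx$, so $\E N_t < \infty$ iff~(\ref{integrableCond}) holds. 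In particular, if~(\ref{integrableCond}) holds then $\E N_t < \infty$ and hence $N_t < \infty$ almost surely.

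For the converse I need: if~(\ref{integrableCond}) fails, then $N_t = \infty$ almost surely. The idea is a localization argument. Partition $\R^d$ into a countable family of disjoint cubes $(Q_i)_{i\in\N}$ of side length comparable to $\rho_t$, chosen fine enough that any $k$ points lying in a common cube (or in a fixed cube together with its neighbours) are automatically within distance $\rho_t \le \diam(H)\theta\rho_t$, so that whenever such a cube contains at least $k$ points of $\eta_t$, the graph $\mathfrak{G}_{\rho_t S}(\eta_t)$ restricted to those points is complete and therefore contains a copy of $H$; thus $N_t \ge \sum_i \ind\{\eta_t(Q_i)\ge k\}$. The events $\{\eta_t(Q_i)\ge k\}$ are independent across $i$ (disjoint cubes, Poisson process), and $\P(\eta_t(Q_i)\ge k) \gtrsim \mu_t(Q_i)^k$ for cubes with small mass, while $\mu_t(Q_i)\asymp t\rho_t^d\, m(\zeta_i)$ for a suitable point $\zeta_i\in Q_i$ by~(\ref{desityCond}) and boundedness of $m$. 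Hence $\sum_i \P(\eta_t(Q_i)\ge k) \gtrsim t^k\rho_t^{dk}\sum_i m(\zeta_i)^k \asymp t^k\rho_t^{d(k-1)}\int m(x)^k\,dx = \infty$ when~(\ref{integrableCond}) fails, so by the second Borel--Cantelli lemma infinitely many of these independent events occur almost surely, giving $N_t = \infty$ a.s.

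The main obstacle I anticipate is the bookkeeping in the localization step: I must choose the cube size and the notion of "cube-plus-neighbours" cluster carefully so that (a) a full cluster really does realize a copy of $H$ (which only needs the points to be mutually within distance $\rho_t$, since $B(0,1)\subseteq S$ after rescaling), and (b) I retain enough independence. The cleanest fix is to use only single cubes of side $\rho_t/\sqrt d$ so their diameter is at most $\rho_t$; then distinct cubes give genuinely independent events and no neighbour-coupling is needed, at the cost of only a constant factor in the sum, which is harmless. A secondary point is that~(\ref{desityCond}) is stated with the supremum $\sup_t \rho_t$, so the comparison constant $c$ works uniformly for every $t$; since the proposition is "for any $t\in\N$", it suffices to run the argument at a fixed $t$, and all the implied constants may depend on $t$, $H$, $d$, $\theta$, $c$ and $\sup_t\rho_t$ without affecting the dichotomy.
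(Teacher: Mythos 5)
Your argument is correct, but the converse direction is handled by a genuinely different route than the paper's. For the forward part you and the paper do essentially the same thing: apply the multivariate Mecke formula, use the density condition (\ref{desityCond}) to replace $m(x_2)\cdots m(x_k)$ by $m(x_1)^{k-1}$ up to a factor $c^{k-1}$, and conclude $\E N_t \asymp t^k\rho_t^{d(k-1)}\int m^k\,dx$, so that (\ref{integrableCond}) gives $\E N_t<\infty$ and hence $N_t<\infty$ a.s. (One small imprecision: $f_H$ is \emph{not} bounded below by a constant times $\ind\{\diam(\xb)\le\diam(H)\theta\rho_t\}$; by (K1) the lower bound only holds with the indicator of $\diam(\xb)\le\rho_t$, while (K2)--(K3) give the upper bound with the larger radius. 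Since both indicators integrate to order $\rho_t^{d(k-1)}$, the two-sided estimate on $\E N_t$ survives with different constants, so nothing breaks.) For the reverse implication the paper simply invokes Theorem \ref{main}: almost sure finiteness of $N_t$ implies, via the already-established upper-tail concentration inequality and the fact that exponential tail decay forces all moments to exist, that $\E N_t<\infty$, whence (\ref{integrableCond}). You instead prove the contrapositive directly: tile $\R^d$ by disjoint cubes of diameter $\rho_t$, bound $N_t$ from below by the number of cubes containing at least $k$ points, use boundedness of $m$ to get $\P(\eta_t(Q_i)\ge k)\ge C\,\mu_t(Q_i)^k$ uniformly, compare $\sum_i\mu_t(Q_i)^k$ with $\int m^k$ via (\ref{desityCond}), and apply the second Borel--Cantelli lemma to the independent events $\{\eta_t(Q_i)\ge k\}$. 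This is sound (the key facts --- independence of Poisson counts on disjoint sets, a complete graph on $k$ co-located points containing a copy of $H$, and the uniform lower bound on $\P(Z=k)$ for Poisson variables of bounded mean --- all check out) and it buys something the paper's proof does not: a self-contained, elementary argument yielding the stronger zero--one dichotomy that $N_t$ is either a.s. finite or a.s. infinite, with no appeal to the concentration machinery. The paper's version is shorter in context, since Theorem \ref{main} is already available, but it hides the moment-existence step inside that machinery.
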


\begin{rem*}
The reason for considering densities that satisfy condition (\ref{desityCond}) is that this allows us to use a dominated convergence argument in the proof of the upcoming Theorem \ref{SGCExp}. A class of densities that satisfy condition (\ref{desityCond}) is given by
\begin{align*}
m(x) = A (\lVert x \rVert + 1)^{-\gamma},
\end{align*}
where $A,\gamma>0$. Indeed, for any $x,y\in \R^d$,
\begin{align*}
\frac{m(x)}{m(y)} = \left(\frac{\lVert y \rVert + 1}{\lVert x \rVert + 1}\right)^\gamma \leq \left(\frac{\lVert x \rVert + \lVert x-y \rVert + 1}{\lVert x \rVert + 1}\right)^\gamma \leq (1+\lVert x-y \rVert)^\gamma.
\end{align*}
An example of how the resulting random graphs for these densities might look like is illustrated in Figure \ref{RGGSim} above. Observe also that condition (\ref{integrableCond}) for a density $m$ given by $m(x)=A(\lVert x \rVert + 1)^{-\gamma}$ is equivalent to $k > d / \gamma$. In particular, the graph in Figure \ref{RGGSim} has almost surely infinitely many edges, but any subgraph count of order at least $3$ (for example the number of triangles) is almost surely finite.
\end{rem*}

Our goal is to establish strong laws for suitably rescaled versions of the subgraph counts $N_t$. To do so, it is crucial to know the asymptotic behaviour of the expectation $\E N_t$ and the variance $\V N_t$ as $t\to\infty$. In the case when the intensity measure of the underlying Poisson point process is finite, these asymptotics are well known (see e.g. \cite{P_2003}). Since our focus is on random geometric graphs built on non-finite intensity measure processes, the first step towards establishing strong laws is to study expectation and variance asymptotics. We also have concentration inequalities with respect to the median, so the asymptotic behaviour of the median may as well be of interest in applications. The upcoming result addresses these issues. Note that we will write $a_n\sim b_n$ if sequences $(a_n)_{n\in\N}$ and $(b_n)_{n\in\N}$ are \emph{asymptotically equivalent}, meaning that $\lim_{n\to\infty} a_n/b_n = 1$.

\begin{thm} \label{SGCExp}
Assume that the intensity measure $\mu$ is given by a continuous and bounded Lebesgue density $m:\R^d\to\R_+$ that satisfies (\ref{desityCond}) and (\ref{integrableCond}). Then the following holds:
\begin{enumerate}
\item There exists a constant $a>0$ such that
\begin{align*}
\E N_t \sim at^k\rho_t^{d(k-1)}.
\end{align*}
\item If $\lim_{t\to\infty}\E N_t = \infty$, then
\begin{align*}
\M N_t \sim \E N_t.
\end{align*}
\item There exist constants $A^{(n)} > 0$ for $1\leq n \leq k$ such that
\begin{align*}
\V N_t \sim t^k\rho_t^{d(k-1)}\sum_{n=1}^k (t\rho_t^d)^{k-n} A^{(n)}.
\end{align*}
\end{enumerate}
\end{thm}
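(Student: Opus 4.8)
The plan is to prove (i) and (iii) first, via a common rescaling-and-dominated-convergence scheme, and then to deduce (ii) from the concentration inequalities of Theorem~\ref{main} together with (i) and (iii). The chief technical burden lies in (iii): writing down the variance formula in the possibly infinite-intensity regime, carrying out the nested change of variables cleanly, and — the crucial point — exhibiting for each summand a dominating function that is integrable precisely because of condition~(\ref{desityCond}) and $\int m^k<\infty$, plus checking that the limiting constants are strictly positive. Throughout, write $f_t$ for the kernel of the subgraph count $N_t$ in $\mathfrak{G}_{\rho_tS}$ and $f_1$ for the corresponding kernel in $\mathfrak{G}_S$. Rescaling $S$ by $\rho_t$ rescales a configuration by $\rho_t^{-1}$, and $f_1$ depends only on differences, so $f_t(x_1,\dots,x_k)=f_1\bigl(0,(x_2-x_1)/\rho_t,\dots,(x_k-x_1)/\rho_t\bigr)$; also $f_1$ satisfies (K1)--(K3) with $\rho_N=1$ and $\Theta_N=\diam(H)\theta$, whence, with $u_1=0$, $\{f_1>0\}\subseteq\{\max_i\|u_i\|\le\diam(H)\theta\}$ and $f_1\le M_N:=a_H/k!$.

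For (i), the multivariate Mecke equation gives $\E N_t=\int_{(\R^d)^k}f_t(\xb)\,\mu_t^{\otimes k}(d\xb)=t^k\int f_t(\xb)\prod_i m(x_i)\,dx_i$. The substitution $x_1=x$, $x_i=x+\rho_tu_i$ for $i\ge 2$ produces the factor $t^k\rho_t^{d(k-1)}$ and leaves $\int_{\R^d}\int_{(\R^d)^{k-1}}f_1(0,u_2,\dots,u_k)\,m(x)\prod_{i\ge 2}m(x+\rho_tu_i)\,du\,dx$. As $\rho_t\to 0$ the integrand converges pointwise to $m(x)^kf_1(0,u_2,\dots,u_k)$ by continuity of $m$; on $\{f_1>0\}$ one has $\|\rho_tu_i\|\le\diam(H)\theta\sup_s\rho_s$, so (\ref{desityCond}) yields $m(x+\rho_tu_i)\le c\,m(x)$ and hence the dominating function $c^{k-1}M_N\,m(x)^k\,\ind\{\max_i\|u_i\|\le\diam(H)\theta\}$, which is integrable since $\int m^k<\infty$. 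Dominated convergence gives $\E N_t\sim a\,t^k\rho_t^{d(k-1)}$ with $a=\bigl(\int_{\R^d}m^k\bigr)\bigl(\int_{(\R^d)^{k-1}}f_1(0,u_2,\dots,u_k)\,du\bigr)$, and $a>0$ because $m\not\equiv 0$ and, by (K1), $f_1>0$ on a set of positive Lebesgue measure.

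For (iii), since $N_t$ is square-integrable by Theorem~\ref{main}, the Wiener--It\^o chaos expansion of Poisson U-statistics yields $\V N_t=\sum_{\ell=1}^{k}\ell!\binom{k}{\ell}^2\|f_{\ell,t}\|_{L^2(\mu_t^\ell)}^2$, where $f_{\ell,t}(x_1,\dots,x_\ell)=\int_{(\R^d)^{k-\ell}}f_t(x_1,\dots,x_\ell,\yb)\,\mu_t^{k-\ell}(d\yb)$. Each term is treated exactly as in (i): substituting $y_j=x_1+\rho_tz_j$ in the inner integral and $x_1=x$, $x_i=x+\rho_tw_i$ in the outer one pulls out the factor $t^{2k-\ell}\rho_t^{d(2k-\ell-1)}=t^k\rho_t^{d(k-1)}(t\rho_t^d)^{k-\ell}$, while the locality of $f_1$ together with (\ref{desityCond}) provides a dominating function proportional to $m(x)^{2k-\ell}\ind\{\cdots\}$, integrable because $\int m^{2k-\ell}\le\|m\|_\infty^{k-\ell}\int m^k<\infty$. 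Dominated convergence then gives $\|f_{\ell,t}\|_{L^2(\mu_t^\ell)}^2\sim J_\ell\,t^k\rho_t^{d(k-1)}(t\rho_t^d)^{k-\ell}$, where $J_\ell>0$ since the limiting kernel is positive on a set of positive measure (by (K1)) and $m\not\equiv 0$. Putting $A^{(n)}=n!\binom{k}{n}^2J_n>0$ and noting that for each $\ell$ the error term is $o\bigl(t^k\rho_t^{d(k-1)}(t\rho_t^d)^{k-\ell}\bigr)=o\bigl(t^k\rho_t^{d(k-1)}\sum_nA^{(n)}(t\rho_t^d)^{k-n}\bigr)$, since $t\rho_t^d>0$ and all $A^{(n)}>0$, summation gives $\V N_t\sim t^k\rho_t^{d(k-1)}\sum_{n=1}^k(t\rho_t^d)^{k-n}A^{(n)}$.

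For (ii), fix $\epsilon\in(0,1)$. If $\M N_t>(1+\epsilon)\E N_t$, then by definition of the smallest median $\P(N_t>(1+\epsilon)\E N_t)\ge 1/2$, contradicting the upper-tail bound of Theorem~\ref{main} with $r=\epsilon\E N_t$, namely $\P(N_t\ge(1+\epsilon)\E N_t)\le\exp\bigl(-(\E N_t)^{1/k}((1+\epsilon)^{1/(2k)}-1)^2/(2k^2c_d)\bigr)\to 0$ by (i). If $\M N_t<(1-\epsilon)\E N_t$, then $\P(N_t\le(1-\epsilon)\E N_t)\ge\P(N_t\le\M N_t)\ge 1/2$, contradicting the lower-tail bound $\P(N_t\le\E N_t-\epsilon\E N_t)\le\exp\bigl(-\epsilon^2(\E N_t)^2/(2k\V N_t)\bigr)$; this tends to $0$ because (i) and (iii) force $(\E N_t)^2/\V N_t\to\infty$, since $\V N_t/(\E N_t)^2\asymp\sum_{n=1}^k\bigl(t^n\rho_t^{d(n-1)}\bigr)^{-1}$ and, distinguishing the cases $t\rho_t^d\ge 1$ and $t\rho_t^d<1$, one checks $t^n\rho_t^{d(n-1)}\ge\min\{t,\,t^k\rho_t^{d(k-1)}\}$ for every $t$ and every $1\le n\le k$, with $t\to\infty$ and $t^k\rho_t^{d(k-1)}\sim\E N_t/a\to\infty$. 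Hence $(1-\epsilon)\E N_t\le\M N_t\le(1+\epsilon)\E N_t$ for all large $t$, i.e.\ $\M N_t\sim\E N_t$, which is (ii).
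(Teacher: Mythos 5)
Your proposal is correct. Parts (i) and (iii) follow essentially the same route as the paper: the Mecke formula (respectively the chaos-expansion variance formula $\V N_t=\sum_n n!\lVert f_n\rVert_n^2$ from Reitzner--Schulte), the rescaling $x_i=x_1+\rho_t u_i$ that extracts the factors $t^k\rho_t^{d(k-1)}$ and $t^{2k-n}\rho_t^{d(2k-n-1)}$, and dominated convergence with exactly the dominating functions the paper uses --- $c^{k-1}M_N\,m(x)^k$ for the mean and a multiple of $m(x)^{2k-n}$ for the variance terms, integrable by (\ref{desityCond}), (\ref{integrableCond}) and boundedness of $m$; your justification for summing the $k$ asymptotic equivalences (all terms positive, each comparable to the full sum) is also sound. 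The only genuine divergence is in (ii): the paper applies Chebyshev's inequality directly to get the clean bound $|\M N_t-\E N_t|\leq\sqrt{2\V N_t}$ and then shows $\sqrt{\V N_t}/\E N_t\to 0$, whereas you invoke the concentration inequalities of Theorem~\ref{main} to exclude $\M N_t>(1+\epsilon)\E N_t$ and $\M N_t<(1-\epsilon)\E N_t$ separately. Your route is legitimate (no circularity, since Theorem~\ref{main} does not rest on Theorem~\ref{SGCExp}), and your case analysis showing $t^n\rho_t^{d(n-1)}\geq\min\{t,\,t^k\rho_t^{d(k-1)}\}$ is a correct way to see $\V N_t/(\E N_t)^2\to 0$; but note that your lower-tail step reduces to precisely that same ratio computation, so the concentration machinery buys nothing over Chebyshev here, while the Chebyshev argument additionally yields the quantitative estimate $|\M N_t-\E N_t|\leq\sqrt{2\V N_t}$ rather than only the eventual two-sided $\epsilon$-bound.
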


The concentration inequalities together with asymptotic results for expectation and variance can be used to obtain the upcoming strong laws for subgraph counts. Note that for the next result to hold, we only need that expectation and variance behave asymptotically as stated in Theorem \ref{SGCExp}. According to \cite[Proposition 3.1, Proposition 3.7]{P_2003} this asymptotic behaviour is also in order whenever the intensity measure of $\eta$ is finite and has a bounded Lebesgue density, so the following theorem also applies in these situations. The strong law presented below complements the results \cite[Theorem 3.17, Theorem 3.18]{P_2003} that deal with random geometric graphs built over i.i.d. points.

\begin{thm}\label{SLLN}
Assume that the statements (i) and (iii) of Theorem \ref{SGCExp} hold. \linebreak Assume in addition that for some $\gamma>0$,
\begin{align} \label{regime}
\liminf_{t\to\infty} t^{k-\gamma} \rho_t^{d(k-1)} > 0.
\end{align}
Let $a>0$ denote the limit of the sequence $\E N_t/(t^k\rho_t^{d(k-1)})$. Then
\begin{align*}
\frac{N_t}{t^k \rho_t^{d(k-1)}} \overset{a.s.}{\longrightarrow} a \ \ \text{as} \ \ t\to\infty.
\end{align*}
\end{thm}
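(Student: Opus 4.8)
The plan is to derive the strong law from the concentration inequalities of Theorem~\ref{main} by a direct Borel--Cantelli argument; since the index $t$ already ranges over $\N$, no passage to subsequences is needed. Throughout set $b_t:=t^k\rho_t^{d(k-1)}$, so that the assertion reads $N_t/b_t\to a$ almost surely. By statement~(i) of Theorem~\ref{SGCExp} we have $\E N_t/b_t\to a$, and by~(\ref{regime}) there is $c_0>0$ with $t^{k-\gamma}\rho_t^{d(k-1)}\ge c_0$ for all large $t$, hence $b_t\ge c_0t^\gamma\to\infty$. It therefore suffices to prove $(N_t-\E N_t)/b_t\to 0$ almost surely, and for this I would show that for every fixed $\epsilon\in(0,1)$ the series $\sum_{t\in\N}\P(|N_t-\E N_t|>\epsilon b_t)$ converges: the Borel--Cantelli lemma then gives $\limsup_t|N_t-\E N_t|/b_t\le\epsilon$ almost surely, and intersecting these full-probability events over $\epsilon=1/j$, $j\in\N$, yields $(N_t-\E N_t)/b_t\to 0$ almost surely. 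Note that each $N_t$ is a subgraph count in the random geometric graph $\mathfrak{G}_{\rho_t S}(\eta_t)$; since rescaling $S$ by $\rho_t$ affects neither $\theta$ nor $\diam(H)$, the constant $c_d$ of Theorem~\ref{main} is the same for every $t$, and $N_t<\infty$ almost surely because $\E N_t<\infty$ by~(i), so Theorem~\ref{main} applies to each $N_t$.

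For the upper tail, fix $\epsilon\in(0,1)$. Since $\E N_t/b_t\to a$, we have $\E N_t+\epsilon b_t\le(2a+1)b_t$ for all large $t$, and as the derivative of $x\mapsto x^{1/(2k)}$ is decreasing the mean value theorem gives
\begin{align*}
(\E N_t+\epsilon b_t)^{1/(2k)}-(\E N_t)^{1/(2k)}
&\ge\frac{\epsilon b_t}{2k}\,(\E N_t+\epsilon b_t)^{1/(2k)-1}\\
&\ge\frac{\epsilon}{2k}\,(2a+1)^{1/(2k)-1}\,b_t^{1/(2k)}.
\end{align*}
Inserting this into the first inequality of Theorem~\ref{main} yields $\P(N_t\ge\E N_t+\epsilon b_t)\le\exp(-c_1\epsilon^2b_t^{1/k})$ for a constant $c_1>0$ depending only on $a,k,d,H,\theta$. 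Since $b_t^{1/k}\ge c_0^{1/k}t^{\gamma/k}$ for large $t$ and $\gamma/k>0$, the corresponding series converges.

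For the lower tail, the second inequality of Theorem~\ref{main} gives $\P(N_t\le\E N_t-\epsilon b_t)\le\exp(-\epsilon^2b_t^2/(2k\V N_t))$, so I would bound $b_t^2/\V N_t$ from below. By statement~(iii) of Theorem~\ref{SGCExp}, $\V N_t\le 2b_t\sum_{n=1}^k(t\rho_t^d)^{k-n}A^{(n)}$ for large $t$, and since $(t\rho_t^d)^{k-n}/b_t=t^{-n}\rho_t^{-d(n-1)}$ this gives $b_t^2/\V N_t\ge(2\sum_{n=1}^kA^{(n)}t^{-n}\rho_t^{-d(n-1)})^{-1}$. Using $\rho_t^{d(n-1)}=(\rho_t^{d(k-1)})^{(n-1)/(k-1)}\ge(c_0t^{\gamma-k})^{(n-1)/(k-1)}$ from~(\ref{regime}), each summand is at most $C\,t^{e_n}$ with $e_n=-n+(k-\gamma)(n-1)/(k-1)$; since $e_n$ is affine in $n$ with $e_1=-1$ and $e_k=-\gamma$, it follows that $e_n\le-\delta$ for every $1\le n\le k$, where $\delta:=\min(1,\gamma)>0$. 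Hence $b_t^2/\V N_t\ge c_2t^\delta$ for large $t$ and some $c_2>0$, so $\sum_t\exp(-c_2\epsilon^2t^\delta/(2k))<\infty$ and the lower-tail series converges as well. Combining both estimates with the Borel--Cantelli reduction above completes the proof.

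I expect the only genuinely delicate point to be the lower-tail bookkeeping: distilling the multi-term variance asymptotics of Theorem~\ref{SGCExp}(iii) into the single clean bound $\V N_t=O(b_t^2t^{-\delta})$ requires combining it term by term with the regime hypothesis~(\ref{regime}) via the identity $\rho_t^{d(n-1)}=(\rho_t^{d(k-1)})^{(n-1)/(k-1)}$ and checking that the largest of the exponents $e_n$, $1\le n\le k$, remains strictly negative. The upper-tail estimate (concavity of $x\mapsto x^{1/(2k)}$ together with~(\ref{regime})) and the Borel--Cantelli conclusion are routine.
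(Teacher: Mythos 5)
Your proof is correct and takes essentially the same route as the paper: both apply the mean-based concentration inequalities of Theorem~\ref{main} to the events $\{|N_t-\E N_t|\geq\epsilon b_t\}$, use (\ref{regime}) together with Theorem~\ref{SGCExp}(i) and (iii) to show the resulting upper- and lower-tail bounds are summable in $t$, and conclude by Borel--Cantelli. The only (harmless) differences are cosmetic: you control the increment $(\E N_t+\epsilon b_t)^{1/(2k)}-(\E N_t)^{1/(2k)}$ via the mean value theorem where the paper factors out $b_t^{1/(2k)}$ directly, and your bookkeeping for $\V N_t/b_t^2$ yields the exponent $\min(1,\gamma)$ in place of the paper's $\gamma/k$, which is slightly sharper but immaterial for summability.
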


\section{Proofs}

\subsection*{Proof of Theorem \ref{lem2}}

To get prepared for the proof of Theorem \ref{lem2}, which is crucial to establish the desired deviation and concentration inequalities, we first prove the following two lemmas.

\begin{lem} \label{lem3}
Let $x_1,\ldots,x_k,y_1,\ldots, y_k\in\R_{\geq 0}$. Then
\[
\prod_{i=1}^k x_i + \prod_{i=1}^k y_i \leq \prod_{i=1}^k \max(x_i,y_i) + \prod_{i=1}^k \min(x_i,y_i).
\]
\end{lem}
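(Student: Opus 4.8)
The plan is to reduce the claimed inequality to a statement about sorted vectors and then prove it by induction on $k$. The key observation is that the right-hand side is invariant under any coordinatewise reordering of the two vectors: if I permute the coordinates of $(x_i)$ and $(y_i)$ by the \emph{same} permutation, both $\prod \max(x_i,y_i)$ and $\prod \min(x_i,y_i)$ are unchanged, while the left-hand side $\prod x_i + \prod y_i$ is also unchanged. So without loss of generality I may assume the indices are arranged however is convenient. The cleanest choice is to sort so that $x_i \geq y_i$ for $i$ in some set and $x_i < y_i$ otherwise; after a reordering I may assume there is an index $j \in \{0,1,\ldots,k\}$ with $x_i \geq y_i$ for $i \leq j$ and $x_i < y_i$ for $i > j$. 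Then $\prod \max(x_i,y_i) = \bigl(\prod_{i\le j} x_i\bigr)\bigl(\prod_{i>j} y_i\bigr)$ and $\prod \min(x_i,y_i) = \bigl(\prod_{i\le j} y_i\bigr)\bigl(\prod_{i>j} x_i\bigr)$.

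\textbf{Main step.} Write $P = \prod_{i\le j} x_i$, $Q = \prod_{i>j} y_i$, $P' = \prod_{i\le j} y_i$, $Q' = \prod_{i>j} x_i$, all nonnegative reals. The claim becomes $PQ' + P'Q \leq PQ + P'Q'$, i.e. $0 \leq PQ + P'Q' - PQ' - P'Q = (P-P')(Q-Q')$. Now by construction $P = \prod_{i\le j} x_i \geq \prod_{i\le j} y_i = P'$ since $x_i \geq y_i$ for each $i \leq j$ and all factors are nonnegative; similarly $Q = \prod_{i>j} y_i \geq \prod_{i>j} x_i = Q'$ since $y_i > x_i$ for $i > j$. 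Hence $(P-P')(Q-Q') \geq 0$, which gives the inequality. This handles all cases at once, so no induction is actually needed; the two-block factorization trick does everything.

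\textbf{Anticipated obstacle.} There is essentially no hard analytic obstacle here; the only thing to be careful about is the bookkeeping with the permutation-invariance claim, and the fact that the coordinates may be zero (so one should phrase the monotonicity of products of nonnegative reals without dividing). I would state the reordering reduction explicitly as a one-line remark, then present the $j$-block factorization and the identity $PQ+P'Q' - PQ'-P'Q = (P-P')(Q-Q')$, and conclude. If a referee prefers an inductive write-up, the same idea runs as induction on $k$: peel off one coordinate, use that $\max(x_k,y_k)\cdot(\text{smaller product}) + \min(x_k,y_k)\cdot(\text{larger product})$ dominates $x_k \cdot \prod_{i<k} x_i + y_k \cdot \prod_{i<k} y_i$ by the two-term case, but the direct factorization above is shorter and I would use that.
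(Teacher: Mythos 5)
Your proof is correct and is essentially the same argument as the paper's: both partition the indices according to whether $x_i\geq y_i$ and reduce the inequality to the nonnegativity of $(P-P')(Q-Q')$, which the paper carries out as an add-and-subtract rearrangement rather than your explicit factorization. The reordering step is harmless (one can equally work with the index sets directly), and no induction is needed in either write-up.
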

\begin{proof}
Let $I$ and $J$ be the sets of indices $i$ satisfying $x_i<y_i$ and $x_i\geq y_i$, respectively. Then we have
\[
\prod_{i\in I} x_i < \prod_{i\in I} y_i \ \ \text{ and } \ \ \prod_{i\in J} x_i \geq \prod_{i\in J} y_i.
\]
It follows that
\begin{align*}
\prod_{i=1}^k x_i + \prod_{i=1}^k y_i &= \prod_{i\in I} x_i \prod_{i\in J} x_i + \prod_{i\in I} y_i \prod_{i\in J} y_i\\ &=
\prod_{i\in I} x_i \prod_{i\in J} x_i + \left(\prod_{i\in I}y_i - \prod_{i\in I} x_i \right) \prod_{i\in J} y_i + \prod_{i\in I} x_i\prod_{i\in J} y_i\\ &\leq
\prod_{i\in I} x_i \prod_{i\in J} x_i + \left(\prod_{i\in I}y_i - \prod_{i\in I} x_i \right) \prod_{i\in J} x_i + \prod_{i\in I} x_i\prod_{i\in J} y_i\\ &=
\prod_{i\in I}y_i\prod_{i\in J} x_i + \prod_{i\in I} x_i\prod_{i\in J} y_i =
\prod_{i=1}^k \max(x_i,y_i) + \prod_{i=1}^k \min(x_i,y_i).
\end{align*}
\end{proof}

Using the lemma above, we can prove the following.

\begin{lem} \label{lem1}
Let $n_1,\ldots,n_N\in \R_{\geq 0}$, $k\geq 2$ and let $\pi_1,\ldots,\pi_k$ be permutations of $\{1,\ldots, N\}$. Then
 \[
  \sum_{i=1}^N \prod_{j=1}^k n_{\pi_j(i)} \leq \sum_{i=1}^N n_i^k.
 \]
\end{lem}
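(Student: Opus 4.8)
The plan is to reduce the general statement to repeated application of Lemma \ref{lem3}. Notice first that by relabelling we may assume $\pi_1 = \mathrm{id}$, since replacing $i$ by $\pi_1^{-1}(i)$ permutes the outer summation index without changing the left-hand side, and the right-hand side $\sum_i n_i^k$ is invariant under any permutation of $\{1,\dots,N\}$. So it suffices to bound
\[
\sum_{i=1}^N n_i \prod_{j=2}^k n_{\pi_j(i)}
\]
by $\sum_{i=1}^N n_i^k$, and more generally one expects to prove the stronger (majorization-type) claim that replacing any one of the permutations $\pi_j$ by the identity only increases the sum. That reduction is the conceptual heart of the argument; once we have it, iterating over $j=2,\dots,k$ turns every factor into $n_i$ and finishes the proof.

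To carry out the single-swap step, I would argue that it suffices to treat the case of swapping along a single transposition. Fix all permutations except $\pi_j$, and write $\sigma = \pi_j$. Pick two indices $a,b$ with $\sigma(a) \neq \sigma(b)$ such that composing $\sigma$ with the transposition $(\sigma(a)\ \sigma(b))$ — i.e. swapping the two values $\sigma(a)$ and $\sigma(b)$ — moves $\sigma$ "closer" to the identity (in the sense of reducing the number of inversions relative to the target, or reducing the number of non-fixed points by using bubble-sort style adjacent swaps). The only two terms of $\sum_i \prod_j n_{\pi_j(i)}$ affected are $i=a$ and $i=b$; denote by $P$ and $Q$ the products $\prod_{j' \neq j} n_{\pi_{j'}(a)}$ and $\prod_{j'\neq j} n_{\pi_{j'}(b)}$ of the untouched factors, and set $u = n_{\sigma(a)}$, $v = n_{\sigma(b)}$. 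The affected contribution is $uP + vQ$ before the swap and $vP + uQ$ after; the difference is $(u-v)(P-Q)$, which may have either sign, so a naive single transposition need not help. This is the main obstacle: one cannot simply sort $\sigma$ by arbitrary transpositions.

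The fix is to apply Lemma \ref{lem3} to bundle the terms correctly rather than swapping them individually. Group the indices and observe that for the pair $\{a,b\}$ we have, writing the two length-$k$ vectors of factors as $(u,P_1,\dots,P_{k-1})$ and $(v,Q_1,\dots,Q_{k-1})$ (here the $P$'s and $Q$'s are the individual remaining factors $n_{\pi_{j'}(a)}$, $n_{\pi_{j'}(b)}$), Lemma \ref{lem3} gives
\[
\prod u\text{-vector} + \prod v\text{-vector} \;\le\; \prod \max + \prod \min,
\]
i.e. we may coordinatewise sort the two vectors into a "large" vector and a "small" vector without decreasing the sum. Doing this for a suitably chosen sequence of pairs, one drives the configuration toward the fully sorted arrangement in which the $n$'s line up monotonically in every coordinate; in that arrangement the outer sum is $\sum_i \prod_j n_{(i)}$ with all coordinates equal to the same sorted value, hence exactly $\sum_i n_{(i)}^k = \sum_i n_i^k$. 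Making the "suitably chosen sequence of pairs" precise — e.g. by induction on $N$ (pull out the index achieving $\max_i n_i$ and argue it can be aligned to dominate every coordinate, then recurse) or by a clean inductive formulation of the sorting process — is the one routine-but-fiddly point; I would state it as: by induction on $N$, after applying Lemma \ref{lem3} finitely many times we may assume $n_{\pi_j(1)} = \max_i n_i$ for all $j$, whence the $i=1$ term is $\max_i n_i^k$, the remaining $N-1$ terms involve only $n_2,\dots$ (in some permuted order) and the inductive hypothesis applies.
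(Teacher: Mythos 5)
Your argument is correct and is essentially the paper's proof: both rest on Lemma \ref{lem3} as the pairwise max/min rearrangement step, followed by induction on $N$ after concentrating the largest value at a single outer index via finitely many such bundled swaps. The paper organizes the ``finitely many swaps'' part slightly differently --- it sorts $n_1\le\dots\le n_N$ and runs an inner induction on the number of positions $i$ with $\pi_j(i)=N$ for some $j$, which also sidesteps the tie-handling you would need when restricting to an $(N-1)$-element index set --- but that is bookkeeping, not a different idea.
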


\begin{proof}
Without loss of generality we can assume
\begin{align} \label{wlog1}
n_1\leq n_2\leq\ldots\leq n_N.
\end{align}
The proof is by induction on $N$. For $N=1$ there is nothing to prove, for $N=2$ the result follows from Lemma \ref{lem3}. So let $N>2$ and assume the result holds for $N-1$. Let
\[
t = |\{i: \pi_j(i)=N \text{ for some } j\}|.
\]
We prove by induction on $t$ that the following holds: There exist permutations $\tilde{\pi}_1,\ldots,\tilde{\pi}_k$ of $\{1,\ldots,N-1\}$ such that
\[
\sum_{i=1}^{N} \prod_{j=1}^k n_{\pi_j(i)} \leq \sum_{i=1}^{N-1} \prod_{j=1}^k n_{\tilde{\pi}_j(i)} + n_N^k.
\]
The result then follows by applying the induction hypotheses (with respect to $N$) to the right hand side of this inequality.

For $t=1$ the permutations $\tilde{\pi}_i$ obviously exist since in this case there is an index $l$ such that $\pi_j(l) = N$ for all $j$. Consider the case $t>1$. Then there exist indices $l_1\neq l_2$ such that $\pi_{j_1}(l_1)=\pi_{j_2}(l_2) = N$ for some $j_1,j_2$. We define permutations $\bar{\pi}_1,\ldots,\bar{\pi}_k$ of $\{1,\ldots,N\}$ by $\bar{\pi}_j = \pi_j$ if $\pi_j(l_1) > \pi_j(l_2)$ and by
\[
\bar{\pi}_j(i) =
\begin{cases}
\pi_j(i) & \text{if } i\neq l_1,l_2\\
\pi_j(l_2) & \text{if } i= l_1\\
\pi_j(l_1) & \text{if } i= l_2\\
\end{cases}
\]
if $\pi_j(l_1) < \pi_j(l_2)$. Then, using Lemma \ref{lem3} together with \eqref{wlog1} we obtain
\begin{align*}
&\sum_{i=1}^{N} \prod_{j=1}^k n_{\pi_j(i)} 
= \sum_{i=1,i\neq l_1,l_2}^{N} \prod_{j=1}^k n_{\pi_j(i)} + \prod_{j=1}^k n_{\pi_j(l_1)} + \prod_{j=1}^k n_{\pi_j(l_2)}\\
&\leq \sum_{i=1,i\neq l_1,l_2}^{N} \prod_{j=1}^k n_{\pi_j(i)} + \prod_{j=1}^k \max(n_{\pi_j(l_1)}, n_{\pi_j(l_2)}) + \prod_{j=1}^k \min(n_{\pi_j(l_1)}, n_{\pi_j(l_2)})\\
&= \sum_{i=1}^{N} \prod_{j=1}^k n_{\bar{\pi}_j(i)}.
\end{align*}
Note, that $\bar{\pi}_j(l_2)\neq N$ for all $j$ and hence $|\{i:\bar{\pi}_j(i)=N \text{ for some } j\}| = t-1$. Applying the induction hypotheses (with respect to $t$) yields the existence of the desired permutations $\tilde{\pi}_j$. This concludes the proof.
\end{proof}

\begin{proof}[Proof of Theorem \ref{lem2}]

First note that the result holds trivially in the case when $F(\xi) = \infty$. Moreover, if $F(\xi)<\infty$, then property (K3) guarantees that $F(x,\xi)=0$ for all but finitely many $x\in\xi$. Hence, we can assume without loss of generality that $\xi$ is finite and that $F(x,\xi)>0$ for all $x\in\xi$.

We consider a tiling of $\R^d$ into cubes of diagonal $\rho_F$. Since we assumed that $\xi$ is finite, we can choose out of these cubes $C_1,\ldots,C_N$ such that $\xi\subseteq \cup C_i$. Then for
any distinct $x_1\ldots,x_k\in\xi$ contained in the same cube we have $f(x_1\ldots,x_k)\geq m_F$. Moreover, translating the tiling a bit if necessary, we can assume without loss of generality that each point $x\in\xi$ is contained in the interior of exactly one of the cubes.

Observe that condition (K2) implies the following: Let $i\in\{1,\ldots, N\}$. Then there are $(2\lceil\sqrt{d} \Theta_F\rceil + 1)^d =: q$ many cubes $C^1_i,\ldots,C^q_i$ such that for any $x\in C_i$ we have $\{y_1,\ldots,y_{k-1}\}\subset \cup_{j=1}^qC_i^j$ whenever $f(x,y_1,\ldots,y_{k-1}) > 0$. Note also, that the $C_i^j$ can be chosen such that
for any fixed $j$ it holds that $\{C_1,\ldots, C_N\} = \{C_1^j,\ldots,C_N^j\}$.

Let $[q] := \{1,\ldots,q\}$ and for any $x\in\xi\cap C_i$ and $(j_1,\ldots,j_{k-1})\in[q]^{k-1}$ let
\[
C_i(x,j_1,\ldots,j_{k-1}) := \left[(C_i^{j_1}\cap(\xi\setminus x))\times \ldots \times (C_i^{j_{k-1}}\cap(\xi\setminus x))\right]_{\neq}.
\]
Then we have
\begin{align*}
& \sum_{x\in\xi}F(x,\xi)^2 = \sum_{i=1}^N \sum_{x\in \xi\cap C_i}F(x,\xi)^2 \\ &= \sum_{i=1}^N
\sum_{x\in \xi\cap C_i}\left(\sum_{(j_1,\ldots,j_{k-1})\in[q]^{k-1}}\sum_{\yb\in C_i(x,j_1,\ldots,j_{k-1})} f(x,\yb)\right)^2,
\end{align*}
where $\yb = (y_1,\ldots,y_{k-1})$. Now let $n_i = |\xi\cap C_i|$ and $n_i^{(j)} = |\xi\cap C_i^j|$. Then by condition (K3) the last expression in the above display does not exceed
\[
 \sum_{i=1}^N
\sum_{x\in \xi\cap Ci}\left(\sum_{(j_1,\ldots,j_{k-1})\in[q]^{k-1}} M_F\prod_{l=1}^{k-1} n_i^{(j_l)}\right)^2.
\]
Moreover, we have
\[
\left(\sum_{(j_1,\ldots,j_{k-1})\in[q]^{k-1}} M_F\prod_{l=1}^{k-1} n_i^{(j_l)}\right)^2 \leq q^{k-1}M_F^2 \sum_{(j_1,\ldots,j_{k-1})\in[q]^{k-1}}\prod_{l=1}^{k-1} (n_i^{(j_l)})^2.
\]
Thus, it follows from the above considerations that
\[
\sum_{x\in\xi}F(x,\xi)^2
\leq q^{k-1} M_F^2\sum_{i=1}^N
\sum_{x\in \xi\cap C_i}\sum_{(j_1,\ldots,j_{k-1})\in[q]^{k-1}}\prod_{l=1}^{k-1} (n_i^{(j_l)})^2. 
\]
Rearranging the triple sum, we can write the right hand side as
\begin{align*}
 & q^{k-1} M_F^2\sum_{(j_1,\ldots,j_{k-1})\in[q]^{k-1}}\sum_{i=1}^N
\sum_{x\in \xi\cap C_i}\prod_{l=1}^{k-1} (n_i^{(j_l)})^2\\ =  & \ q^{k-1} M_F^2\sum_{(j_1,\ldots, j_{k-1})\in[q]^{k-1}}\sum_{i=1}^N
n_i\prod_{l=1}^{k-1} (n_i^{(j_l)})^2.
\end{align*}
By Lemma \ref{lem1} this expression does not exceed
\begin{align*}
 q^{k-1} M_F^2\sum_{(j_1,\ldots,j_{k-1})\in[q]^{k-1}}\sum_{i=1}^N
n_i^{2k-1} &= q^{2(k-1)} M_F^2\sum_{i=1}^N
n_i^{2k-1}\\ &\leq q^{2(k-1)} M_F^2\left(\sum_{i=1}^N
n_i^k\right)^{\tfrac{2k-1}{k}}.
\end{align*}
At this, the latter inequality holds by monotonicity of the $p$-Norm.

It remains to prove
\[
 \sum_{i=1}^N n_i^k \leq \frac{k^{k-1}}{m_F(k-1)!}\sum_{x\in\xi} F(x,\xi).
\]
We will see that for any $x\in\xi\cap C_i$ we have
\begin{align} \label{ineq1}
n_i^{k-1}\frac{(k-1)!}{k^{k-1}}\leq \frac{F(x,\xi)}{m_F}.
\end{align}
This then
gives, as desired, that
\begin{align*}
\frac{k^{k-1}}{(k-1)!}\sum_{x\in\xi} F(x,\xi) &= m_F\sum_{i=1}^N\sum_{x\in\xi\cap C_i} \frac{k^{k-1}F(x,\xi)}{m_F(k-1)!}\\ &\geq
m_F\sum_{i=1}^N\sum_{x\in\xi\cap C_i} n_i^{k-1} = m_F\sum_{i=1}^N n_i^k.
\end{align*}
To prove \eqref{ineq1}, let $x\in\xi\cap C_i$. Consider the set
\[
A := \{(y_1,\ldots,y_{k-1})\in(\xi\setminus x)
^{k-1}_{\neq} \ : \ f(x,y_1,\ldots,y_{k-1}) > 0\}.
\]
Then, by definition of $F(x,\xi)$ and by condition (K3) we have
\[
 F(x,\xi) = \sum_{(y_1,\ldots,y_{k-1})\in A} f(x,y_1,\ldots,y_{k-1})\geq \sum_{(y_1,\ldots,y_{k-1})\in A} m_F.
 \]
Thus
\[
 \frac{F(x,\xi)}{m_F} \geq |A|.
\]
Since by condition (K1) we have $((\xi\setminus x)\cap C_i)^{k-1}_{\neq} \subseteq A$, it follows that
\[
\prod_{t=1}^{k-1} (n_i - t) =  \prod_{t=0}^{k-2} (n_i - 1 - t) = |((\xi\setminus x)\cap C_i)^{k-1}_{\neq}| \leq \frac{F(x,\xi)}{m_F}.
\]
Moreover, it is straightforward to check that
\[
\frac{\prod_{t=1}^{k-1} (n_i - t)}{n_i^{k-1}} \geq \frac{(k-1)!}{k^ {k-1}}
\]
whenever $n_i>k$. Hence, it follows that for $n_i>k$ we have
\[
n_i^{k-1} \frac{(k-1)!}{k^{k-1}} \leq \prod_{t=1}^{k-1} (n_i - t)\leq \frac{F(x,\xi)}{m_F}.
\]
Therefore, the inequality in (\ref{ineq1}) holds for $n_i>k$. To conclude that the inequality also holds for $n_i\leq k$, recall that we assumed whithout loss of gererality that $F(y,\xi)>0$ for all $y\in\xi$, thus $F(x,\xi)>0$. Hence, it follows from condition (K3) together with symmetry of $f$ that even $F(x,\xi)\geq m_F(k-1)!$. Thus, for $n_i\leq k$, we obtain
\[
n_i^{k-1} \frac{(k-1)!}{k^{k-1}} \leq k^{k-1} \frac{(k-1)!}{k^{k-1}} = (k-1)! \leq \frac{F(x,\xi)}{m_F}.
\]
This concludes the proof.
\end{proof}

\subsection*{Proof of Theorem \ref{thm2}}
The approach that is described in the following to obtain the deviation inequalities presented in Theorem \ref{thm2} is a refinement of the method suggested in \cite{RST_2013, LR_2015}. We will use the convex distance for Poisson point processes which was introduced in \cite{R_2013}. Let $\Nb_{\rm fin}$ be the space of finite point configurations in $\R^d$, equipped with the $\sigma$-algebra $\mathcal{N}_{\rm fin}$ that is obtained by restricting $\mathcal{N}$ to $\Nb_{\rm fin}$. Then for any $\xi\in \Nb_{\rm fin}$ and $A\in \mathcal{N}_{\rm fin}$ this distance is given by
\[
d_T(\xi, A) = \max_{u \in S(\xi)}\min_{\delta\in A} \sum_{x\in\xi\setminus\delta}u(x),
\]
where
\[
S(\xi) = \{u: \R^d\to\R_{\geq 0} \text{ measurable with} \ \sum_{x\in\xi}u(x)^2\leq 1\}.
\]
To obtain the deviation inequalities for a U-statistic $F$, we will first relate $F$ in a reasonable way to $d_T$. Then we will use the inequality
\begin{align} \label{concentration}
\P(\eta \in A)\P(d_T(\eta , A)\geq s)\leq \exp\left(-\frac{s^2}{4}\right) \ \ \ \ \text{for} \ A\in \mathcal{N}_{\rm fin}, \ s\geq 0,
\end{align}
which was proved in \cite{R_2013}. For the upcoming proof of Theorem \ref{thm2} we also need the following relation, stated in \cite{LR_2015}. We carry out the corresponding straightforward computations for the sake of completeness.

\begin{lem} \label{lem4}
Let $F$ be a U-statistic of order $k$ with kernel $f\geq 0$. Then for any $\xi, \delta\in \Nb_{\rm fin}$ we have
\[
F(\xi)\leq k\sum_{x\in\xi\setminus\delta}F(x,\xi) + F(\delta).
\]
\end{lem}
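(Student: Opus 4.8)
The plan is to prove the inequality $F(\xi)\leq k\sum_{x\in\xi\setminus\delta}F(x,\xi) + F(\delta)$ by splitting the defining sum for $F(\xi)$ according to whether a given $k$-tuple $\xb = (x_1,\ldots,x_k)\in\xi_{\neq}^k$ meets the point configuration $\xi\setminus\delta$ or not. More precisely, I would partition $\xi_{\neq}^k$ into the set $T_1$ of tuples all of whose coordinates lie in $\delta$ (equivalently, none lies in $\xi\setminus\delta$), and the set $T_2$ of tuples with at least one coordinate in $\xi\setminus\delta$. Since $f\geq 0$, the contribution from $T_1$ is bounded above by $\sum_{\xb\in\delta_{\neq}^k} f(\xb) = F(\delta)$ — here one uses $\delta_{\neq}^k\cap\xi_{\neq}^k\subseteq\delta_{\neq}^k$ together with nonnegativity, being slightly careful that $\delta$ need not be a subconfiguration of $\xi$, but the tuples counted in $T_1$ are precisely tuples of distinct points of $\xi$ lying in $\delta$, hence in particular tuples of distinct points of $\delta$.

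For the contribution from $T_2$, the key observation is that each tuple $\xb\in T_2$ has at least one coordinate (possibly several) in $\xi\setminus\delta$; choosing, say, the first such coordinate and relabelling, every such tuple arises as $(x,\yb)$ with $x\in\xi\setminus\delta$ and $\yb\in(\xi\setminus x)_{\neq}^{k-1}$, and conversely each pair $(x,\yb)$ with $x\in\xi\setminus\delta$ gives via the $k$ cyclic (or all $k$ positional) placements of $x$ among the coordinates at most $k$ tuples of $T_2$. By symmetry of $f$, summing $f$ over all tuples in $T_2$ is therefore at most $k\sum_{x\in\xi\setminus\delta}\sum_{\yb\in(\xi\setminus x)_{\neq}^{k-1}} f(x,\yb) = k\sum_{x\in\xi\setminus\delta} F(x,\xi)$, where again $f\geq 0$ ensures the overcounting only helps. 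Adding the two bounds gives the claim.

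The main thing to be careful about — and the only place a little thought is needed — is the bookkeeping in the $T_2$ bound: one must check that every tuple with at least one coordinate in $\xi\setminus\delta$ is indeed captured (at least once, with nonnegative weight) by the double sum $k\sum_{x\in\xi\setminus\delta} F(x,\xi)$, rather than missed. The cleanest way is to write, for $\xb\in T_2$, $f(\xb)\leq \sum_{i=1}^k \ind\{x_i\in\xi\setminus\delta\}\, f(\xb)$ (valid because the indicator sum is $\geq 1$ on $T_2$ and $f\geq 0$), then interchange the order of summation, fix the coordinate index $i$ and the value $x=x_i\in\xi\setminus\delta$, and recognize the inner sum as $F(x,\xi)$ by symmetry of $f$ — this yields exactly the factor $k$. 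Since this is a short and essentially routine verification, I expect no real obstacle; the lemma is a purely combinatorial rearrangement exploiting symmetry and nonnegativity of the kernel.
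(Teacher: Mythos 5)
Your proposal is correct and is essentially the paper's own argument: the same partition of $\xi_{\neq}^k$ according to whether all coordinates lie in $\delta$, the same bound $\ind(\exists i:x_i\notin\delta)\leq\sum_{i=1}^k\ind(x_i\notin\delta)$ combined with symmetry of $f$ to produce the factor $k$, and the same use of $f\geq 0$ to bound the remaining sum by $F(\delta)$. The cleaned-up indicator version in your last paragraph is exactly the computation in the paper, so nothing further is needed.
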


\begin{proof}
We have
\begin{align*}
F(\xi)
&= 
\sum_{ \xb\in\xi_{\neq}^k} {\ind} (\exists x_i \notin  \delta) f( \xb)
+
\sum_{ \xb\in (\xi \cap \delta)_{\neq}^k} f( \xb)
\\ &\leq 
\sum_{i=1}^k \sum_{ \xb\in\xi_{\neq}^k} {\ind} (x_i \notin  \delta) f( \xb)
+
\sum_{ \xb\in \delta_{\neq}^k} f( \xb)
\\ & =
k \sum_{ \xb\in\xi_{\neq}^k} {\ind} (x_1 \notin  \delta) f( \xb)
+
\sum_{ \xb\in \delta_{\neq}^k} f( \xb)
\\ &= 
k\sum_{x\in\xi\setminus\delta}F(x,\xi) + F(\delta)
\end{align*}
where the third line holds by symmetry of $f$.
\end{proof}

Before we proof Theorem \ref{thm2} in its full generality, we need to establish the corresponding result for finite intensity measure processes. The proof of the next statement is a variation of the method that was suggested in \cite[Sections 5.1 and 5.2]{RST_2013} and \cite[Section 3]{LR_2015}.

\begin{prop}\label{propMedian}
Assume that the intensity measure of $\eta$ is finite. Let $F$ be a U-statistic of order $k$ with kernel $f \geq 0$ and let $\mm$ be a median of $F$. Assume that $F$ satisfies (\ref{condUstat}) for some $\alpha\in[0,2)$ and $c>0$. Then for all $r\geq 0$ one has
\begin{align*}
\P(F\geq \mm+r) &\leq 2 \exp\left(-\frac{r^2}{4k^2c(r+\mm)^{\alpha}}\right),\\
\P(F\leq \mm-r) &\leq 2 \exp\left(-\frac{r^2}{4k^2c \mm^{\alpha}}\right). 
\end{align*}
\end{prop}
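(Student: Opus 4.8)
The plan is to relate the U-statistic $F$ to the convex distance $d_T$ via Lemma~\ref{lem4} and then invoke the concentration inequality~\eqref{concentration}. Fix $r \geq 0$ and set $s = \frac{r}{2\sqrt{c}\,(r+\mm)^{\alpha/2}}$ (and $s' = \frac{r}{2\sqrt{c}\,\mm^{\alpha/2}}$ for the lower tail). First I would introduce the sublevel set $A = \{\xi \in \Nb_{\rm fin} : F(\xi) \leq \mm\}$, so that $\P(\eta \in A) \geq 1/2$ by the definition of a median. The key geometric step: suppose $\xi \in \Nb_{\rm fin}$ satisfies $F(\xi) \geq \mm + r$; I claim $d_T(\xi, A) \geq s$. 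To see this, take any $\delta \in A$. Lemma~\ref{lem4} gives $F(\xi) \leq k\sum_{x \in \xi \setminus \delta} F(x,\xi) + F(\delta) \leq k\sum_{x\in\xi\setminus\delta} F(x,\xi) + \mm$, so $\sum_{x\in\xi\setminus\delta} F(x,\xi) \geq r/k$. Now apply Cauchy--Schwarz with the test function $u(x) = F(x,\xi) / \bigl(\sum_{y\in\xi} F(y,\xi)^2\bigr)^{1/2} \in S(\xi)$: we get $\sum_{x\in\xi\setminus\delta} u(x) \geq \frac{\sum_{x\in\xi\setminus\delta} F(x,\xi)}{\bigl(\sum_{y\in\xi}F(y,\xi)^2\bigr)^{1/2}} \geq \frac{r/k}{\sqrt{c}\,F(\xi)^{\alpha/2}}$, where the last step uses condition~\eqref{condUstat}. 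Since $\delta \in A$ was arbitrary and $u \in S(\xi)$, taking the minimum over $\delta$ and then the maximum over $u$ yields $d_T(\xi, A) \geq \frac{r/k}{\sqrt{c}\,F(\xi)^{\alpha/2}}$.

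The subtlety here is that the bound just obtained still depends on $F(\xi)$, which is not controlled on the event $\{F \geq \mm + r\}$. To handle this I would split: on the event $\{\mm + r \leq F \leq \mm + r'\}$ for a cutoff $r'$ one has $d_T(\eta, A) \geq \frac{r/k}{\sqrt{c}\,(\mm+r')^{\alpha/2}}$, but a cleaner route is to observe that the event $\{F \geq \mm + r\}$ can be written as $\bigcup_{j \geq 0}\{F \geq \mm + r + j\}$ handled via a union bound, or — simpler still — to note that we actually want $\P(F \geq \mm + r)$ and can instead directly argue on the event $\{F \geq \mm + r\}$ that $F(\xi)^{\alpha/2} \geq (\mm+r)^{\alpha/2}$ is false in the wrong direction, so one needs the reverse. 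The correct fix: decompose $\{F \ge \mm + r\} = \bigcup_{n=0}^\infty \{\mm + r + n \le F < \mm + r + n + 1\}$; on the $n$-th piece, $d_T(\eta, A) \ge \frac{r/k}{\sqrt c (\mm + r + n + 1)^{\alpha/2}} =: s_n$ and also $F \ge \mm + r + n$, but the latter gives no help. Instead I would simply use that whenever $F(\xi) \ge \mm + r$, one can apply the estimate with $r$ replaced by $F(\xi) - \mm \ge r$, obtaining $d_T(\xi, A) \ge \frac{(F(\xi)-\mm)/k}{\sqrt c\, F(\xi)^{\alpha/2}}$. The function $t \mapsto \frac{t - \mm}{t^{\alpha/2}}$ is increasing for $t$ large (since $\alpha < 2$), so on $\{F \ge \mm + r\}$ this is at least $\frac{r/k}{\sqrt c (\mm + r)^{\alpha/2}} = s$. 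Hence $\{F \ge \mm + r\} \subseteq \{d_T(\eta, A) \ge s\}$, and~\eqref{concentration} with $\P(\eta \in A) \ge 1/2$ gives $\P(F \ge \mm + r) \le 2\exp(-s^2/4) = 2\exp\bigl(-\frac{r^2}{16 k^2 c (\mm+r)^\alpha}\bigr)$; adjusting the constant in the definition of $s$ (the paper's statement has $4k^2c$, so one takes $s = \frac{r}{k\sqrt c (\mm+r)^{\alpha/2}}$ and must check $t \mapsto (t-\mm)/t^{\alpha/2}$ monotonicity more carefully, or absorb a factor) yields the stated bound.

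For the lower tail I would symmetrically set $A' = \{\xi : F(\xi) \ge \mm\}$ (so $\P(\eta \in A') \ge 1/2$ since $\mm$ is a median, using $\P(F \ge \mm) \ge 1/2$), and argue that if $F(\xi) \le \mm - r$ then $d_T(\xi, A') \ge s'$. Here, for any $\delta \in A'$, Lemma~\ref{lem4} applied with the roles reversed gives $F(\delta) \le k\sum_{x \in \delta \setminus \xi} F(x, \delta) + F(\xi)$; but this involves $d_T$ in the "wrong" direction (distance from $\delta$ to $\xi$). The standard trick, as in \cite{RST_2013, LR_2015}, is instead to use the symmetric form: take $\delta = \eta$ and let $\xi$ range over $A'$, i.e. bound $F(\eta) \ge \mm$ would hold except $F(\eta) \le \mm - r$; more precisely one shows $\{F \le \mm - r\} \subseteq \{d_T(\eta, A') \ge s'\}$ by noting that for the witness $\delta \in A'$ minimizing the distance, $F(\delta) \ge \mm$ and $F(\delta) \le k\sum_{x\in\delta\setminus\eta}F(x,\delta) + F(\eta)$ — but the sum is over $\delta$, not $\eta$. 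The resolution used in those references is to apply~\eqref{concentration} with $A = \{F \le \mm - r\}$ itself: then $\P(\eta \in A) \cdot \P(d_T(\eta, A) \ge s') \le \exp(-s'^2/4)$, and separately show that on $\{F \ge \mm\}$ one has $d_T(\eta, A) \ge s'$ via Lemma~\ref{lem4} with $\xi = \eta$, $\delta \in A$: $\mm \le F(\eta) \le k\sum_{x\in\eta\setminus\delta}F(x,\eta) + F(\delta) \le k\sum_{x\in\eta\setminus\delta}F(x,\eta) + \mm - r$, so $\sum_{x\in\eta\setminus\delta}F(x,\eta) \ge r/k$, and Cauchy--Schwarz plus~\eqref{condUstat} give $d_T(\eta, A) \ge \frac{r/k}{\sqrt c\, F(\eta)^{\alpha/2}} \ge \frac{r/k}{\sqrt c\, \mm^{\alpha/2}}$ — wait, this needs $F(\eta) \le \mm$, which fails on $\{F \ge \mm\}$; one needs $F(\eta)^{\alpha/2}$ bounded, and on $\{F \ge \mm\}$ it is $F(\eta)$ itself that could be large. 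This is the genuine technical obstacle, and I expect the actual argument pairs the right sublevel/superlevel set with the right application of~\eqref{concentration} so that the $F^{\alpha/2}$ factor is always evaluated where $F$ is controlled; concretely, for the lower tail one should take $A = \{F \ge \mm\}$, show $\{F \le \mm - r\} \subseteq \{d_T(\eta, A) \ge s'\}$ using that the distance-realizing $\delta$ has $F(\delta) \ge \mm$ while the test function is built from $F(\cdot, \delta)$, and apply~\eqref{condUstat} at $\delta$ where $F(\delta) \ge \mm$ gives a lower — not upper — bound, so in fact one bounds $d_T \ge \frac{r/k}{\sqrt c\,(\text{something involving }F(\delta))^{\alpha/2}}$ and this forces keeping $F(\delta)$ as small as possible, i.e. $F(\delta) = \mm$, yielding exactly the denominator $\mm^{\alpha}$. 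I would work out this pairing carefully; it is the one place where the direction of all inequalities must be tracked with care, and it is where the asymmetry between the $(r+\mm)^\alpha$ in the upper bound and the $\mm^\alpha$ in the lower bound originates.
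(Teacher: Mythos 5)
Your upper-tail argument is correct and is essentially the paper's: take $A=\{F\le\mm\}$, build the test function from the local versions $F(x,\xi)$, combine Lemma~\ref{lem4} with \eqref{condUstat} to get $d_T(\xi,A)\ge (F(\xi)-\mm)/(k\sqrt c\,F(\xi)^{\alpha/2})$, and use monotonicity of $s\mapsto s/(s+\mm)^{\alpha/2}$ to bound this below by $r/(k\sqrt c\,(r+\mm)^{\alpha/2})$ on $\{F\ge\mm+r\}$. The monotonicity you were unsure about holds for all $s>0$ (not just large $s$), since $\mm\ge0$ and $\alpha<2$; with $s=r/(k\sqrt c\,(r+\mm)^{\alpha/2})$ the constant comes out as $4k^2c$ exactly as stated, so no factor needs absorbing.

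The lower tail is where you have a genuine gap, and you flag it yourself. You do land on the correct pairing --- apply \eqref{concentration} to $A=\{F\le\mm-r\}$ and show $d_T(\eta,A)$ is large on $\{F\ge\mm\}$, so that $\tfrac12\le\P(F\ge\mm)\le\P(d_T(\eta,A)\ge s')\le\P(\eta\in A)^{-1}\exp(-s'^2/4)$ --- but you cannot close the key estimate because you discard too much: keeping only $\sum_{x\in\eta\setminus\delta}F(x,\eta)\ge r/k$ from Lemma~\ref{lem4} leaves the uncontrolled factor $F(\eta)^{\alpha/2}$ in the denominator, which is not bounded above on $\{F\ge\mm\}$. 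The fix is the very device you already used for the upper tail: keep the full numerator, $d_T(\xi,A)\ge (F(\xi)-(\mm-r))/(k\sqrt c\,F(\xi)^{\alpha/2})$, and note that $s\mapsto (s-(\mm-r))/s^{\alpha/2}$ is increasing (one may assume $r\le\mm$, since otherwise $\P(F\le\mm-r)=0$ because $F\ge0$), so on $\{F\ge\mm\}$ the ratio is at least $r/(k\sqrt c\,\mm^{\alpha/2})$: the numerator grows fast enough to beat $F^{\alpha/2}$ precisely because $\alpha/2<1$. Your closing speculation --- that \eqref{condUstat} should be evaluated at the minimizing $\delta$ --- is a wrong turn: the test function and the normalization $\sum_{y\in\xi}F(y,\xi)^2\le cF(\xi)^\alpha$ both live at the configuration $\xi=\eta$, and the only fact used about $\delta$ is $F(\delta)\le\mm-r$.
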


\begin{proof}
Let $\xi\in\Nb_{\rm fin}$ and $A\in \mathcal{N}_{\rm fin}$. Define the map $u_\xi:\R^d\to\R_{\geq 0}$ by
\begin{align*}
u_\xi(x) = \frac{F(x,\xi)}{\sqrt{\sum_{y\in\xi}F(y,\xi)^2}} \ \ \text{if} \ \ x\in\xi \ \ \text{and} \ \ u_\xi(x)=0 \ \ \text{if} \ \ x\notin \xi.
\end{align*}
Then we have $u_\xi\in S(\xi)$, thus
\[
d_T(\xi,A)\geq  \min_{\delta\in A} \frac{ \sum_{x\in\xi\setminus\delta}F(x,\xi)}{\sqrt{\sum_{y\in\xi}F(y,\xi)^2}}.
\]
Moreover, by Lemma \ref{lem4}, for any $\delta\in A$ we have
\[
F(\xi)\leq k\sum_{x\in\xi\setminus\delta}F(x,\xi) + F(\delta).
\]
Thus, since by (\ref{condUstat}) we have $\sum_{y\in\xi}F(y,\xi)^2 \leq c F(\xi)^\alpha$ for $\P_\eta$-a.e. $\xi\in\Nb_{\rm fin}$, we obtain
\[
d_T(\xi,A)\geq  \min_{\delta\in A} \frac{ F(\xi)-F(\delta)}{k\sqrt{\sum_{y\in\xi}F(y,\xi)^2}} \geq \min_{\delta\in A} \frac{ F(\xi) - F(\delta)}{k\sqrt{c}F(\xi)^{\alpha/2}}.
\]
Now, to prove the first inequality, let
\[
A = \{\delta\in\Nb_{\rm fin} \ : \ F(\delta)\leq\mm\}.
\]
Then, since the map $s\mapsto s / (s+\mm)^{\alpha/2}$ is increasing, we have
\[
d_T(\xi,A) \geq\frac{F(\xi) - \mm}{k\sqrt{c}F(\xi)^{\alpha/2}}\geq \frac{r}{k\sqrt{c}(r+\mm)^{\alpha/2}}
\]
for $\P_\eta$-a.e. $\xi\in\Nb_{\rm fin}$ that satisfies $F(\xi)\geq \mm + r$. This observation together with (\ref{concentration}) and the fact that $\P(\eta\in A)\geq \tfrac{1}{2}$ yields
\begin{align*}
\P(F(\eta )\geq \mm + r) &\leq \P\left(d_T(\eta,A) \geq \frac{r}{k\sqrt{c}(r+\mm)^{\alpha/2}}\right)\\
&\leq 2 \exp\left(-\frac{r^2}{4k^2c(r+\mm)^{\alpha}}\right).
\end{align*}

To prove the second inequality, let $r\geq 0$ and
\[
A = \{\delta\in\Nb_{\rm fin} \ : \ F(\delta)\leq\mm - r\}.
\]
If $\P(F(\eta)\leq \mm - r) = \P(\eta\in A) = 0$, the desired inequality holds trivially, so assume that $\P(F(\eta)\leq \mm - r)>0$ and note that this implies $r\leq \mm$. Then, since the map $s\mapsto (s-(\mm-r))/s^{\alpha/2}$ is increasing, we have
\[
d_T(\xi,A) \geq\frac{F(\xi) - (\mm-r)}{k\sqrt{c}F(\xi)^{\alpha/2}}\geq
\frac{\mm - (\mm-r)}{k\sqrt{c}\mm^{\alpha/2}} = \frac{r}{k\sqrt{c}\mm^{\alpha/2}}
\]
for $\P_\eta$-a.e. $\xi\in\Nb_{\rm fin}$ that satisfies $F(\xi)\geq \mm$. Thus, it follows from (\ref{concentration}) that
\begin{align*}
 \frac{1}{2} &\leq \P(F(\eta )\geq \mm) \leq \P\left(d_T(\eta ,A) \geq
\frac{r}{k\sqrt{c}\mm^{\alpha/2}}\right)\\ &\leq \frac{1}{\P(F(\eta )\leq \mm -r)}
\exp\left(-\frac{r^2}{4k^2c\mm^{\alpha}}\right).
\end{align*}
\end{proof}

As a final preparation for the proof of Theorem \ref{thm2}, we establish the following lemma. Recall that $\M X$ is the smallest median of a random variable $X$, as defined in \eqref{MedDef}.

\begin{lem}\label{medianLemma}
Let $X$ and $X_n, n\in\N$ be random variables such that a.s. $X_{n+1}\geq X_n$ for all $n\in\N$ and $X_n\overset{a.s.}{\to} X$. Then there exists a non-decreasing sequence $(\mm_n)_{n\in\N}$ where $\mm_n$ is a median of $X_n$ such that $\lim_{n\to\infty} \mm_n = \M X$.
\end{lem}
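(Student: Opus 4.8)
The plan is to use monotone convergence of distribution functions together with a careful choice of medians that respects the monotonicity of the sequence $(X_n)$. The main subtlety is that medians are not unique, so one has to make the right selection; the natural choice is to take $\mm_n$ to be the smallest median $\M X_n$ itself, but these need not form a non-decreasing sequence in general, so some additional bookkeeping is required.

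First I would record the basic distributional consequences of the hypotheses. Since $X_n \le X_{n+1}$ a.s.\ and $X_n \to X$ a.s., for every fixed $x\in\R$ we have the pointwise bounds $\P(X_n \le x) \ge \P(X_{n+1}\le x)$, so the sequence $n\mapsto \P(X_n\le x)$ is non-increasing; moreover $\{X_n\le x\}$ decreases and $\bigcap_n\{X_m \le x \text{ for all } m\ge n\}$ relates to $\{X\le x\}$. More precisely, for $x$ a continuity point of the law of $X$ one gets $\P(X_n\le x)\to\P(X\le x)$, and in general $\limsup_n \P(X_n\le x) \le \P(X\le x)$ while $\liminf$ controls things from below at strict inequalities: $\lim_n \P(X_n < x) \le \P(X < x)$ can fail, but $\lim_n\P(X_n\le x)\ge \P(X<x)$ always holds because $\{X<x\}\subseteq\liminf_n\{X_n\le x\}$. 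I would extract from this that $\M X_n$ is eventually close to $\M X$: for any $\varepsilon>0$, using that $\P(X\le \M X - \varepsilon) < 1/2 \le \P(X\le \M X)$ is not quite available (only $\P(X < \M X)\le 1/2$), one argues that $\P(X_n\le \M X + \varepsilon)\ge \P(X\le \M X+\varepsilon/2)\ge 1/2$ for large $n$ (continuity point or monotonicity), hence $\M X_n \le \M X + \varepsilon$ eventually, and similarly $\P(X_n \le \M X - \varepsilon) \to$ something $<1/2$ forces $\M X_n \ge \M X - \varepsilon$ eventually. This yields $\M X_n \to \M X$.

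Next I would fix the monotonicity defect. Since $X_n\le X_{n+1}$ a.s., \emph{some} median of $X_{n+1}$ is $\ge$ \emph{some} median of $X_n$ — indeed the set of medians of $X_n$ is a closed interval $[\M X_n, \M^+ X_n]$, and from $\P(X_{n+1}\le x)\le \P(X_n\le x)$ one sees that the median interval of $X_{n+1}$ lies "to the right" of that of $X_n$ in the weak sense that $\M^+ X_{n+1}\ge \M X_n$ and $\M X_{n+1}\ge$ no immediate comparison, but $\M^+ X_n \le \M^+ X_{n+1}$. The clean fix: define $\mm_n := \min\{\M X_m : m \ge n\}$ — but this infimum may not be attained and may not be a median of $X_n$. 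Instead I would define $\mm_n$ recursively by $\mm_1 := \M X_1$ and $\mm_{n+1} := \max(\mm_n, \M X_{n+1})$, then verify (a) $\mm_{n+1}\ge \mm_n$ by construction, (b) $\mm_n$ is a median of $X_n$ — this needs that $\mm_n\in[\M X_n, \M^+ X_n]$, which follows by induction: $\mm_n = \max(\M X_1,\dots,\M X_n)$, and one checks $\M X_j \le \M^+ X_n$ for $j\le n$ using the weak monotonicity of median intervals noted above, while $\mm_n \ge \M X_n$ trivially — and (c) $\mm_n \to \M X$, which follows from $\M X_n \le \mm_n = \max_{j\le n}\M X_j$ combined with $\M X_j \to \M X$ and a uniform eventual upper bound $\M X_j \le \M X + \varepsilon$ for $j$ large, so $\mm_n \le \max(\max_{j\le j_0}\M X_j, \M X+\varepsilon)$, and taking $n$ large the first term is dominated once we also know $\mm_n \ge \M X_n \to \M X$.

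The main obstacle I expect is step (b): showing the recursively defined $\mm_n$ is genuinely a median of $X_n$, which hinges on the inequality $\M X_j \le \M^+ X_n$ for $j \le n$ (i.e.\ every smallest-median of an earlier variable is dominated by the largest-median of a later one). This requires proving that the median \emph{intervals} are weakly increasing, i.e.\ $\sup\{x : \P(X_n\le x)<1/2\}$ is non-decreasing in $n$ and likewise for the right endpoint, which is exactly where the distributional monotonicity $\P(X_n\le x)\ge \P(X_{n+1}\le x)$ gets used. The rest — convergence $\M X_n\to \M X$ and the limit of $\mm_n$ — is a routine $\varepsilon$-argument with distribution functions and the almost sure monotone convergence, and I would not belabor it.
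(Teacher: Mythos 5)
Your argument is correct, and it is close in spirit to the paper's: both proofs rest on the two facts that medians of a.s.\ ordered random variables are ordered, and that a Portmanteau-type bound $\limsup_n \P(X_n\le x)\le \P(X\le x)$ forces the medians of $X_n$ to eventually exceed any $x<\M X$. The difference is which endpoint of the median interval you track. The paper works with the \emph{largest} medians $\hat{\M}X_n$, shows they form a non-decreasing sequence with $\lim_n \hat{\M}X_n\ge \M X$, and then selects a suitable median inside each interval $[\M X_n,\hat{\M}X_n]$ (in effect $\mm_n=\min(\hat{\M}X_n,\M X)$); you work with the \emph{smallest} medians and prove the stronger statement $\M X_n\to\M X$ directly. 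One remark that simplifies your write-up considerably: your concern that $(\M X_n)$ "need not be non-decreasing" is unfounded. Since $X_{n+1}\ge X_n$ a.s.\ gives $\P(X_{n+1}\le x)\le\P(X_n\le x)$ for every $x$, the set $\{x:\P(X_{n+1}\le x)\ge 1/2\}$ is contained in $\{x:\P(X_n\le x)\ge 1/2\}$, whence $\M X_{n+1}\ge\M X_n$ immediately. Consequently your recursively defined $\mm_n=\max_{j\le n}\M X_j$ is just $\M X_n$ itself, and the entire step (b) — verifying $\M X_j\le \M^+X_n$ for $j\le n$ and that the running maximum lands in the median interval — is vacuous. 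With that observation, your proof reduces to: $\M X_n$ is non-decreasing, bounded above by $\M X$ (as $X_n\le X$ a.s.), and $\ge \M X-\epsilon$ eventually by the $\limsup$ bound together with continuity from above of $\P(X_n\le \M X-\epsilon)\downarrow\P(X\le \M X-\epsilon)<1/2$; so $\mm_n:=\M X_n$ already does the job. This is, if anything, slightly cleaner than the paper's route, which needs the final selection step because it only controls $\hat{\M}X_n$ from below.
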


\begin{proof}
For a random variable $Z$, let
\begin{align*}
\hat{\M} Z = \sup\{x\in\R : \P(Z\geq x) \geq 1/2\}<\infty.
\end{align*}
Note that $(\hat{\M} X_n)_{n\in\N}$ is a non-decreasing sequence and that $\hat{\M} X_n \leq \hat{\M} X$ for all $n\in\N$, hence $(\hat{\M} X_n)_{n\in\N}$ is convergent. We claim that
\begin{align}\label{medianClaim}
\lim_{n\to\infty} \hat{\M} X_n \geq \M X.
\end{align}
To see this, let $x\in\R$ be such that $\P(X \leq x)<1/2$. Since almost sure convergence of the $X_n$ implies convergence in distribution, we have by the Portmanteau theorem (see e.g. \cite[Theorem 4.25]{Ka_2002}) that
\begin{align*}
\limsup_{n\to\infty} \P(X_n\leq x) \leq \P(X\leq x)<1/2.
\end{align*}
Hence, for sufficiently large $n$, one has $\P(X_n\leq x)<1/2$. This implies that for sufficiently large $n$, we have $\P(X_n\geq x) \geq 1/2$ and thus $\hat{\M} X_n \geq x$. From these considerations it follows that
\begin{align*}
\lim_{n\to\infty} \hat{\M} X_n &\geq \sup\{x\in\R: \P(X\leq x)<\tfrac 12\} \\ &= \inf\{x\in\R: \P(X\leq x)\geq \tfrac 12\} = \M X.
\end{align*}
Hence (\ref{medianClaim}) is established. Now, for any $n\in\N$, either $\M X_n = \hat{\M} X_n$ is the unique median of $X_n$ or all elements in the interval $[\M X_n, \hat{\M} X_n)$ are medians of $X_n$, where $\M X_n$ and $\hat{\M} X_n$ are non-decreasing in $n$. Taking (\ref{medianClaim}) into account as well as the fact that $\M X_n \leq \M X$ for all $n\in\N$, the result follows.
\end{proof}

\begin{proof}[Proof of Theorem \ref{thm2}]
For any $n\in\N$ let $\eta_n = \eta\cap B(0,n)$. Then $\eta_n$ is a Poisson point process with finite intensity measure that is given by $\mu_n(A) = \mu(A\cap B(0,n))$ for any Borel set $A\subseteq \R^d$. We define for any $n\in\N$ the random variable $F_n$ in terms of the functional $F:\Nb\to\R\cup\{\infty\}$ by $F_n = F(\eta_n)$. One easily observes that, since $F$ is a U-statistic with non-negative kernel, we have almost surely $F_{n+1}\geq F_n$ for all $n\in\N$ and also $F_n\overset{a.s.}{\to} F$. According to Lemma \ref{medianLemma} we can choose a non-decreasing sequence $\mm_n, n\in\N$ such that $\mm_n$ is a median of $F_n$ satisfying $\lim_{n\to\infty} \mm_n = \M F$. Let $r\geq 0$. By virtue of Proposition \ref{propMedian}, for any $n\in\N$ we have
\begin{align*}
\P(F_n - \mm_n \geq r) \leq 2 \exp\left(-\frac{r^2}{4k^2c(r+\mm_n)^{\alpha}}\right) \leq 2 \exp\left(-\frac{r^2}{4k^2c(r+\M F)^{\alpha}}\right).
\end{align*}
The sequence of random variables $F_n - \mm_n$ converges almost surely (and thus in distribution) to $F - \M F$. Therefore, by the Portmanteau theorem (see e.g. \linebreak \cite[Theorem 4.25]{Ka_2002}) we have
\begin{align*}
\P(F - \M F > r) \leq \liminf_{n\to\infty} \P(F_n - \mm_n > r) \leq 2 \exp\left(-\frac{r^2}{4k^2c(r+\M F)^{\alpha}}\right).
\end{align*}
The inequality for the lower tail follows analogously.
\end{proof}

\subsection*{Proof of Proposition \ref{optProp}} The proof presented below generalizes ideas from \linebreak \cite[Section 6.1]{BP_2015}. Recall that $F$ is a U-statistic of order $k$ satisfying (K1) to (K3) and that $I$ and $a>0$ are chosen such that
\begin{align*}
\tag{\ref{tailCond}}\P(F\geq M + r) \leq \exp(-I(r)),
\end{align*}
where $M$ is either the mean or a median of $F$, and
\begin{align*}
\tag{\ref{ICond}}\liminf_{r\to\infty} I(r)/r^a > 0.
\end{align*}
\begin{proof}[Proof of Proposition \ref{optProp}]
Choose some $x\in\R^d$ such that $q = \mu(B(x,\rho_F/2))>0$. Then $Z:=\eta(B(x,\rho_F/2))$ is a Poisson random variable with mean $q$. Since the diameter of $B(x,\rho_F/2)$ equals $\rho_F$, the assumptions (K1) and (K3) yield that for any $\xb \in \eta_{\neq}^k \cap B(x,\rho_F/2)$ one has $f(\xb)\geq m_F>0$. It follows that
\begin{align*}
F\geq m_Fk!\binom{Z}{k}.
\end{align*}
Hence, there exists a constant $A>0$ such that for sufficiently large $r$,
\begin{align*}
A Z^k \geq r \ \ \text{implies} \ \ F\geq r.
\end{align*}
Thus, for sufficiently large $r$,
\begin{align}\label{upperTailLowerBound}
\P(F\geq M + r) \geq \P(A Z^k \geq M + r) = \P(Z \geq \tau(r)),\end{align}
where $\tau(r) = A^{-1/k} (r+M)^{1/k}$. The tail asymptotic of a Poisson random variable is well known and one has $\P(Z\geq s) \sim \exp(-s \log(s/q)-q)$ as $s\to\infty$, see e.g. \cite{G_1987}. Here the symbol $\sim$ denotes the asymptotic equivalence relation. Combining this with (\ref{upperTailLowerBound}), we obtain
\begin{align*}
\liminf_{r\to\infty} \frac{\P(F\geq M + r)}{\exp(-\tau(r)\log(\tau(r)/q) - q)} \geq 1.
\end{align*}
Hence, taking (\ref{tailCond}) into account, there exists a constant $B>0$ such that for sufficiently large $r$,
\begin{align*}
\tau(r)\log(\tau(r)/q) + q \geq I(r) -B.
\end{align*}
By virtue of (\ref{ICond}), dividing this inequality by $r^a$ and taking the limit yields
\begin{align} \label{positiveLiminf}
\liminf_{r\to\infty} \frac{\tau(r)\log(\tau(r)/q)}{r^a} > 0.
\end{align}
Moreover, writing $C = A^{-1/k}>0$, we have
\begin{align*}
\frac{\tau(r)\log(\tau(r)/q)}{r^a} \sim C r^{1/k-a} \log(C(r+M)^{1/k} /q).
\end{align*}
The result follows since $a>1/k$ would imply that the RHS in the above display converges to $0$, contradicting (\ref{positiveLiminf}).
\end{proof}

\subsection*{Proof of Proposition \ref{IntegrableProp} and Theorem \ref{SGCExp}}
The upcoming reasoning is partially inspired by the proof of \cite[Proposition 3.1]{P_2003}. First of all, we need to introduce the quantities $\lVert f_n \rVert_n^2$ that are crucial ingredients for the stochastic analysis of Poisson processes using Malliavin Calculus, see e.g. \cite{L_2014} for details. According to \linebreak \cite[Lemma 3.5]{RS_2013}, for a square-integrable U-statistic with kernel $f$ these quantities can be explicitly written as
\begin{align} \label{normFn}
\lVert f_n\rVert _n^ 2 = \binom{k}{n}^2 \int_{(\R^d)^n} \left( \int_{(\R^d)^{k-n}} f(\yb_n,\xb_{k-n})d\mu^{k-n}(\xb_{k-n}) \right)^2 d\mu^n(\yb_n),
\end{align}
where we use the abbreviations $\yb_n = (y_1,\ldots,y_n)$ and $\xb_{k-n} = (x_{n+1},\ldots,x_k)$. Also according to \cite[Lemma 3.5]{RS_2013}, one has that the variance of a square-integrable \linebreak U-statistic $F$ is now given by
\begin{align}\label{VarFormula}
\V F = \sum_{n=1}^k n! \lVert f_n \rVert_n^2.
\end{align}
This formula will be crucial for analysing the variance asymptotics of subgraph counts. For the remainder of the section, let the assumptions and notations of Section \ref{ABSC} prevail. In particular, recall that $m$ denotes the Lebesgue density of the measure $\mu$ and note that we will use the notation
\begin{align*}
m^{\otimes l}(\textbf{z}) = m^{\otimes l} (z_1,\ldots,z_l) = \prod_{i=1}^l m(z_i)
\end{align*}
for any $l\in\N$ and $\textbf{z} = (z_1,\ldots,z_l) \in (\R^d)^l$. The following two lemmas are used in the proofs of Proposition \ref{IntegrableProp} and Theorem \ref{SGCExp}.

\begin{lem} \label{formulasExpNormFn}
Assume that the subgraph count $N_t$ is square-integrable. Then the corresponding quantities $\lVert f_n \rVert_n^2$ given by formula (\ref{normFn}) can be written as
\begin{align} \label{normFnFormula}
c_n(t) \int_{\R^d\times B(0,\Theta)^{n-1}} I_t^y(\yb_n) \left(\int_{B(0,\Theta)^{k-n}} I^x_t( \xb_{k-n}) \ J(\yb_n,  \xb_{k-n}) \ d  \xb_{k-n} \right)^2 d\yb_n,
\end{align}
where
\begin{align*}
c_n(t)& = \frac{t^{2k-n}\rho_t^{d(2k-n-1)}}{(k!)^2} \binom{k}{n}^2,\\
\Theta &= \diam(H)\theta,\\
I^y_t(\yb_n) &= m^{\otimes n}(y_1,\rho_ty_2 + y_1,\ldots, \rho_t y_n + y_1),\\
I^x_t( \xb_{k-n}) &= m^{\otimes k-n}(\rho_tx_{n+1} + y_1,\ldots, \rho_tx_{k} + y_1),\\
J(\yb_n, \xb_{k-n}) &= |\{\text{subgraphs} \ H' \ \text{of} \ G_S(0,y_2 ,\ldots, y_n,x_{n+1}, \ldots, x_{k}): H'\cong H\}|.
\end{align*}
\end{lem}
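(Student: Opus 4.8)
The plan is to compute the quantity $\lVert f_n\rVert_n^2$ given by formula (\ref{normFn}) directly, starting from the explicit kernel $f_{N_t}$ of the subgraph count $N_t$ in the graph $\mathfrak{G}_{\rho_t S}(\eta_t)$ and carrying out a sequence of changes of variables designed to factor out the scaling parameters $t$ and $\rho_t$. First I would write out the kernel: for the rescaled model, $f_{N_t}(x_1,\ldots,x_k) = (k!)^{-1} |\{\text{subgraphs } H' \text{ of } \mathfrak{G}_{\rho_t S}(x_1,\ldots,x_k): H'\cong H\}|$, and observe that by (K2) (with $\Theta_{N_t} = \diam(H)\theta$ and $\rho_{N_t} = \rho_t$) this vanishes unless $\diam(x_1,\ldots,x_k)\leq \diam(H)\theta\rho_t = \Theta\rho_t$. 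Also, the edge $x-y\in\rho_t S$ is equivalent to $(x-y)/\rho_t\in S$, so the combinatorial count of copies of $H$ depends only on the rescaled differences $(x_i-x_j)/\rho_t$.

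The core of the argument is the substitution. In the inner integral $\int_{(\R^d)^{k-n}} f(\yb_n,\xb_{k-n})\,d\mu^{k-n}(\xb_{k-n})$, with $\mu$ having density $m$, I would substitute $x_{n+i} = \rho_t x_i' + y_1$ for $i = 1,\ldots,k-n$; this contributes a Jacobian factor $\rho_t^{d(k-n)}$ and turns the density contribution into $I^x_t(\xb_{k-n}) = m^{\otimes(k-n)}(\rho_t x_{n+1}+y_1,\ldots,\rho_t x_k + y_1)$ after renaming. Since $f$ vanishes unless all points lie within distance $\Theta\rho_t$ of each other, and $y_1$ is one of the points, the new variables $x_i'$ are confined to $B(0,\Theta)$, giving the domain $B(0,\Theta)^{k-n}$. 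Crucially, once everything is expressed through rescaled differences, the combinatorial count becomes $J(\yb_n,\xb_{k-n}) = |\{\text{subgraphs } H' \text{ of } G_S(0,y_2,\ldots,y_n,x_{n+1},\ldots,x_k): H'\cong H\}|$ — note this is now $G_S$, not $G_{\rho_t S}$, and carries no $t$-dependence. Then in the outer integral over $(\R^d)^n$ I would similarly substitute $y_i = \rho_t y_i' + y_1$ for $i = 2,\ldots,n$ (keeping $y_1$ as is), contributing a Jacobian $\rho_t^{d(n-1)}$ and producing $I^y_t(\yb_n) = m^{\otimes n}(y_1,\rho_t y_2 + y_1,\ldots,\rho_t y_n + y_1)$ with $y_2,\ldots,y_n$ now ranging over $B(0,\Theta)$. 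Collecting the powers of $t$ (from $\mu^{k-n} = (t\mu)^{k-n}$ outside and $\mu^n = (t\mu)^n$ — wait, more carefully: the inner integral is against $\mu_t^{k-n} = t^{k-n}\mu^{k-n}$ and the outer against $\mu_t^n = t^n\mu^n$, but the inner appears squared) gives $t^{2(k-n)}\cdot t^n = t^{2k-n}$, and the powers of $\rho_t$ give $(\rho_t^{d(k-n)})^2\cdot \rho_t^{d(n-1)} = \rho_t^{d(2k-2n)+d(n-1)} = \rho_t^{d(2k-n-1)}$; together with the $\binom{k}{n}^2$ from (\ref{normFn}) and the $(k!)^{-2}$ from the kernel normalization, this is exactly $c_n(t)$ as stated.

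The main obstacle I expect is bookkeeping rather than conceptual depth: keeping track of which points serve as "anchors" versus "rescaled offsets" so that the diameter bound really does confine the rescaled variables to $B(0,\Theta)$, and verifying that the combinatorial count $J$ genuinely loses its $t$-dependence after the substitution (this hinges on $\rho_t S$-adjacency of $\rho_t u + y_1$ and $\rho_t v + y_1$ being equivalent to $S$-adjacency of $u$ and $v$, together with $0$ playing the role of the anchor $y_1$ after translating). A secondary point requiring care is the symmetry and measurability needed to justify applying formula (\ref{normFn}) in the first place — but square-integrability of $N_t$ is assumed in the hypothesis, and this is precisely the standing assumption under which \cite[Lemma 3.5]{RS_2013} applies, so (\ref{normFn}) is available verbatim. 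Once the substitutions are performed and the constants collected, comparing with the claimed expression (\ref{normFnFormula}) is immediate, which completes the proof.
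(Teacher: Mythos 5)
Your proposal is correct and follows essentially the same route as the paper's proof: apply formula (\ref{normFn}) with intensity $\mu_t = t\mu$, perform the two affine changes of variables (first $x_{n+i}\mapsto\rho_t x_{n+i}+y_1$ in the inner integral, then $y_i\mapsto\rho_t y_i+y_1$ for $i\geq 2$ in the outer one), observe that the rescaled combinatorial count becomes the $t$-independent $J$ built on $G_S$, and use the locality of the kernel to restrict the domains to $B(0,\Theta)$. The bookkeeping of the powers of $t$ and $\rho_t$ and of the constants $\binom{k}{n}^2/(k!)^2$ matches the paper exactly.
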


\begin{proof}
Recall that the kernel of the subgraph count $N_t$ is given by
\begin{align*}
f_t( \zb) = \frac{|\{\text{subgraphs} \  H' \ \text{of} \ G_{\rho_t S}( \zb): H'\cong H\}|}{k!}, \ \  \zb\in(\R^d)^k.
\end{align*}
Since $N_t$ is assumed to be square-integrable, formula (\ref{normFn}) holds. The integral on the RHS of (\ref{normFn}) can be written as
\begin{align*}
t^{2k-n} \int_{(\R^d)^n} m^{\otimes n}(\yb_n)\left(\int_{(\R^d)^{k-n}} m^{\otimes k-n}( \xb_{k-n}) f_t(\yb_n, \xb_{k-n}) \ d \xb_{k-n}\right)^2 \ d\yb_{n}.
\end{align*}
The change of variables
\[
 \xb_{k-n} \mapsto (\rho_t x_{n+1} + y_1, \ldots, \rho_t x_{k} + y_1)
\]
now gives that the above expression equals
\begin{align*}
\frac{t^{2k-n}\rho_t^{2d(k-n)}}{(k!)^2} \int_{(\R^d)^n} m^{\otimes n}(\yb_n) \left(\int_{(\R^d)^{k-n}} I^x_t( \xb_{k-n}) \ J^x_t(\yb_n,  \xb_{k-n}) \ d  \xb_{k-n} \right)^2 d\yb_n,
\end{align*}
where
\begin{align*}
&J^x_t(\yb_n,  \xb_{k-n}) = |\{\text{subgraphs } H' \text{ of } G_{\rho_tS}(\yb_n, \rho_t x_{n+1} + y_1,\ldots, \rho_t x_{k} + y_1) : H'\cong H\}|.
\end{align*}
We perform a further change of variables for the outer integral, namely
\begin{align*}
\yb_{n} \mapsto (y_1,\rho_t y_2 + y_1, \ldots, \rho_t y_n + y_1).
\end{align*}
This yields that the quantity in question equals
\begin{align*}
\frac{t^{2k-n}\rho_t^{d(2k-n-1)}}{(k!)^2} \int_{(\R^d)^n} I_t^y(\yb_n) \left(\int_{(\R^d)^{k-n}} I^x_t( \xb_{k-n}) \ J(\yb_n,  \xb_{k-n}) \ d  \xb_{k-n} \right)^2 d\yb_n,
\end{align*}
where one should notice that
\begin{align*}
&J^x_t(y_1,\rho_t y_2 + y_1,\ldots, \rho_t y_n + y_1, \xb_{k-n})\\
& =|\{\text{subgraphs} \ H' \ \text{of} \ G_S(0,y_2 ,\ldots, y_n,x_{n+1}, \ldots, x_{k}): H'\cong H\}|\\
& = J(\yb_n,\xb_{k-n}).
\end{align*}
Also, we see that $J(\yb_n, \xb_{k-n}) = 0$ whenever $\lVert x_i\rVert > \diam(H)\theta = \Theta$ for some \linebreak $n+1\leq i\leq k$. Hence, the inner integral is actually an integral over $B(0,\Theta)^{k-n}$. Similarly, the outer integral is an integral over $\R^d \times B(0,\Theta)^{n-1}$.
\end{proof}
In the next statement we use the abbreviation $\xb = (x_1,\ldots,x_k)$. Note that here we do \emph{not} assume that the considered subgraph count $N_t$ is necessarily square-integrable.

\begin{lem} \label{ExpFormula}
The expectation of $N_t$ can be written as
\begin{align} \label{ExpFormulaEq}
\E N_t = \frac{t^k\rho_t^{d(k-1)}}{k!} \int_{\R^d\times B(0,\Theta)^{k-1}} I_t( \xb) \ J( \xb) \ d  \xb,
\end{align}
where
\begin{align}\label{ItDef}
I_t( \xb) &= m^{\otimes k}(x_1,\rho_t x_2 + x_1, \ldots, \rho_t x_{k} + x_1),\\
\nonumber J( \xb) &= |\{\text{subgraphs} \ H' \ \text{of} \ G_{S}(0,x_2,\ldots,x_k): H'\cong H\}|.
\end{align}
In (\ref{ExpFormulaEq}), the LHS is finite if and only if the RHS is finite.
\end{lem}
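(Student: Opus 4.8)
The plan is to compute $\E N_t$ directly by the multivariate Mecke formula and then perform the very same two changes of variables that already appear in the proof of Lemma~\ref{formulasExpNormFn}. Since the kernel $f_t$ is nonnegative, all integrals occurring are well defined in $[0,\infty]$ and no integrability hypothesis is needed, so the claimed finiteness equivalence will come out essentially for free.

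First I would apply the multivariate Mecke equation to $\eta_t$, whose intensity measure is $\mu_t = t\mu$ with Lebesgue density $tm$. For the nonnegative symmetric kernel $f_t$ this yields
\[
\E N_t = \E\sum_{\xb\in(\eta_t)^k_{\neq}} f_t(\xb) = \int_{(\R^d)^k} f_t(\xb)\,d\mu_t^k(\xb) = t^k\int_{(\R^d)^k} f_t(\xb)\,m^{\otimes k}(\xb)\,d\xb,
\]
valid with both sides possibly infinite because $\mu$ is $\sigma$-finite (being locally finite) and $f_t\geq 0$; in particular $\E N_t<\infty$ if and only if the right-hand integral is finite.

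Next I would substitute $x_i\mapsto \rho_t x_i + x_1$ for $i=2,\dots,k$ with $x_1$ held fixed, a linear change of variables with Jacobian $\rho_t^{d(k-1)}$. The key observation is that $(\rho_t x_i + x_1)-(\rho_t x_j + x_1)=\rho_t(x_i-x_j)$ lies in $\rho_t S$ exactly when $x_i-x_j\in S$, and similarly $\rho_t x_i + x_1$ is $\rho_t S$-adjacent to $x_1$ exactly when $x_i$ is $S$-adjacent to $0$; hence $G_{\rho_t S}(x_1,\rho_t x_2 + x_1,\dots,\rho_t x_k + x_1)$ coincides with $G_S(0,x_2,\dots,x_k)$ as labelled graphs, so $k!\,f_t(x_1,\rho_t x_2 + x_1,\dots,\rho_t x_k + x_1)=J(\xb)$. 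Therefore the integral above becomes $\tfrac{t^k\rho_t^{d(k-1)}}{k!}\int_{(\R^d)^k} I_t(\xb)\,J(\xb)\,d\xb$ with $I_t$ as in~(\ref{ItDef}).

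It remains to restrict the domain of integration. Since $H$ is connected on $k$ vertices, any subgraph $H'$ of $G_S(0,x_2,\dots,x_k)$ with $H'\cong H$ lives on a subset of the $k$-element vertex set and hence is a connected spanning subgraph of it; because $S\subseteq B(0,\theta)$, every edge of $G_S$ joins points at Euclidean distance at most $\theta$, so $\lVert x_i\rVert=\lVert x_i-0\rVert$ is at most $\theta$ times the graph distance between $0$ and $x_i$ in $H'$, which is at most $\diam(H)$; thus $\lVert x_i\rVert\leq \diam(H)\theta=\Theta$ for every $i\geq 2$. Consequently $J(\xb)=0$ unless $(x_2,\dots,x_k)\in B(0,\Theta)^{k-1}$, so replacing $(\R^d)^k$ by $\R^d\times B(0,\Theta)^{k-1}$ changes nothing and gives~(\ref{ExpFormulaEq}); the finiteness equivalence is inherited from the Mecke step since each subsequent manipulation preserved the value exactly. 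The argument is pure bookkeeping; the only points requiring a little care are invoking the multivariate Mecke formula for a possibly non-integrable nonnegative kernel over a non-finite but $\sigma$-finite intensity measure, and the elementary graph-geometric support bound of the last paragraph, which is exactly where connectedness of $H$ and the inclusion $S\subseteq B(0,\theta)$ are used.
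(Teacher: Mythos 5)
Your argument is correct and follows essentially the same route as the paper: the Slivnyak--Mecke formula, the affine change of variables $x_i\mapsto\rho_t x_i+x_1$ with Jacobian $\rho_t^{d(k-1)}$, and the restriction of the domain via connectedness of $H$ and $S\subseteq B(0,\theta)$. You merely spell out the support bound and the finiteness equivalence that the paper delegates to ``a reasoning very similar to the proof of Lemma~\ref{formulasExpNormFn},'' and both points are handled correctly.
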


\begin{proof}
Using the Slivniak-Mecke formula (see e.g. \cite[Corollary 3.2.3]{SW_2008}) we can write
\[
\E {N_t} = \frac{t^k}{k!} \int_{(\R^d)^k} m^{\otimes k}( \xb) \ |\{\text{subgraphs} \  H' \ \text{of} \ G_{\rho_t S}( \xb): H'\cong H\}| \ d \xb.
\]
Now, the change of variables
\[
 \xb = (x_1,\ldots,x_k)\mapsto (x_1,\rho_t x_2 + x_1, \ldots, \rho_t x_{k} + x_1)
\]
followed by a reasoning very similar to the one in the proof of Lemma \ref{formulasExpNormFn} yields the result.
\end{proof}

\begin{proof}[Proof of Proposition \ref{IntegrableProp}]
Let the notation of Lemma \ref{ExpFormula} prevail. The assumption (\ref{desityCond}) implies that for any $t\in\N$ and $\xb\in\R^d\times B(0,\Theta)^{k-1}$,
\begin{align*}
c^{-k}m(x_1)^k \leq I_t(\xb) \leq c^km(x_1)^k.
\end{align*}
Using Lemma \ref{ExpFormula}, one obtains
\begin{align*}
c^{-k} A \int_{\R^d}m(x)^k\ dx  \leq \frac{k!}{t^k\rho_t^{d(k-1)}}\E N_t \leq c^k A \int_{\R^d} m(x)^k\ dx,
\end{align*}
where 
\begin{align*}
A = \int_{B(0,\Theta)^{k-1}} |\{\text{subgraphs} \ H' \ \text{of} \ G_S(0,x_2 , \ldots, x_{k}): H'\cong H\}|\ d  \xb > 0.
\end{align*}
Hence, it follows that $\int m(x)^k dx<\infty$ is equivalent to integrability of $N_t$. According to Theorem \ref{main}, the subgraph count $N_t$ is integrable if and only if it is almost surely finite. The result follows.
\end{proof}

\begin{proof}[\it{Proof of Theorem \ref{SGCExp}}]
[Proof of (i)] 
Let the notation of Lemma \ref{ExpFormula} prevail. Since by assumption the density $m$ is continuous, we have for $I_t$ given by (\ref{ItDef}) and for any $\xb\in(\R^d)^k$,
\[
\lim_{t\to\infty}I_t( \xb) = m(x_1)^k.
\]
Now, the condition (\ref{desityCond}) guarantees that for any $\xb\in\R^d\times B(0,\Theta)^{k-1}$,
\begin{align*}
I_t(\xb) \leq c^km(x_1)^k.
\end{align*}
Moreover, by virtue of assumption (\ref{integrableCond}) and boundedness of $J$,
\begin{align*}
K := \int_{\R^d\times B(0,\Theta)^{k-1}} m(x_1)^k J(\xb) \ d\xb < \infty.
\end{align*}
Hence, Lemma \ref{ExpFormula} together with the dominated convergence theorem yield that the sequence $\E {N_t} / (t^k \rho_t^{d(k-1)})$ converges to $K/k!>0$.
\medskip

[Proof of (ii)] Since $\M N_t$ is a median of $N_t$, in the case $\M N_t > \E N_t$ one has
\begin{align*}
\frac 12 &\leq \P(N_t \geq \M N_t) = \P(N_t - \E N_t \geq \M N_t- \E N_t)\\ &\leq \P(|N_t - \E N_t| \geq |\M N_t- \E N_t|) \leq \frac{\V N_t}{(\M N_t - \E N_t)^2},
\end{align*}
where for the last inequality we used Chebyshev. Similarly, if $\M N_t < \E N_t$,
\begin{align*}
\frac 12 &\leq \P(N_t \leq \M N_t) \leq \P(|N_t - \E N_t| \geq |\M N_t- \E N_t|) \leq \frac{\V N_t}{(\M N_t - \E N_t)^2}.
\end{align*}
We see that in any case one has
\begin{align*}
\left| \frac{\M N_t}{\E N_t} - 1 \right| \leq \frac{\sqrt{2\V N_t}}{\E N_t}.
\end{align*}
Now, the statements (i) and (iii) of the theorem yield
\begin{align}\label{asympVbyE}
\frac{\sqrt{2\V N_t}}{\E N_t} \sim \sqrt{\frac{2}{a^2}\sum_{n=1}^k \frac{(t\rho_t^d)^{k-n}}{t^k\rho_t^{d(k-1)}} A^{(n)}} = \sqrt{\frac{2}{a^2}\sum_{n=1}^k \rho_t^{d(1-n/k)} (t^k\rho_t^{d(k-1)})^{-n/k} A^{(n)}}.
\end{align}
The assumption $\lim_{t\to\infty}\E N_t = \infty$ together with statement (i) of the theorem and $\lim_{t\to\infty} \rho_t = 0$ implies that this tends to $0$ as $t\to\infty$.
\medskip

[Proof of (iii)] First note that by Proposition \ref{IntegrableProp} together with Theorem \ref{main} the assumption (\ref{integrableCond}) guarantees that all moments of the subgraph counts $N_t$ exist. In particular, the $N_t$ are square-integrable, thus formula (\ref{VarFormula}) is in order. So, to analyse the asymptotic behaviour of $\V N_t$, we need to analyse the asymptotics of the quantities $\lVert f_n \rVert_n^2$. This will be done by proving convergence of the integrals that appear in (\ref{normFnFormula}),
\begin{align*}
\int_{\R^d\times B(0,\Theta)^{n-1}} I_t^y(\yb_n) \left(\int_{B(0,\Theta)^{k-n}} I^x_t( \xb_{k-n}) \ J(\yb_n,  \xb_{k-n}) \ d  \xb_{k-n} \right)^2 d\yb_n.
\end{align*}
We first remark that by continuity of the density $m$, for any $ \xb_{k-n}\in(\R^d)^{k-n}$,
\[
\lim_{t\to\infty}I^x_t(\xb_{k-n}) = m(y_1)^{k-n}.
\]
Since $J$ and $m$ are bounded, the dominated convergence theorem applies and gives
\begin{align*}
&\lim_{t\to\infty} \int_{B(0,\Theta)^{k-n}} I^x_t( \xb_{k-n}) \ J(\yb_n,  \xb_{k-n}) \ d  \xb_{k-n}\\ &= \int_{B(0,\Theta)^{k-n}} m(y_1)^{k-n} \ J(\yb_n,  \xb_{k-n}) \ d  \xb_{k-n}.
\end{align*}
Now, it follows from the assumption (\ref{desityCond}) that for any $ \xb_{k-n}\in B(0,\Theta)^{k-n}$ and $\yb_{n}\in \R^d\times B(0,\Theta)^{n-1}$,
\begin{align*}
I^x_t( \xb_{k-n}) \leq c^{k-n} m(y_1)^{k-n} \ \ \text{and} \ \ I^y_t(\yb_{n}) \leq c^{n} m(y_1)^{n}.
\end{align*}
Moreover, we have that
\begin{align*}
&\int_{\R^d\times B(0,\Theta)^{n-1}}m(y_1)^{n} \left(\int_{B(0,\Theta)^{k-n}} m(y_1)^{k-n} \ J(\yb_n,  \xb_{k-n}) \ d  \xb_{k-n} \right)^2 \ d\yb_n\\
&\leq (\Theta^{d}\kappa_d)^{2k-n-1} \lVert J\rVert_\infty^2 \int_{\R^d}m(x)^{2k-n}  \ d x < \infty,
\end{align*}
where $\kappa_d$ denotes the Lebesgue measure of the unit ball in $\R^d$. Here the finiteness of the latter integral follows from assumption (\ref{integrableCond}) since $m$ is bounded and $2k-n\geq k$. Furthermore, by continuity of $m$, one has for any $\yb_n\in \R^d\times B(0,\Theta)^{n-1}$,
\begin{align*}
\lim_{t\to\infty} I^y_t(\yb_n) = m(y_1)^n.
\end{align*}
We conclude that the dominated convergence theorem applies and gives
\begin{align*}
&\lim_{t\to\infty} \int_{\R^d\times B(0,\Theta)^{n-1}} I_t^y(\yb_n) \left(\int_{B(0,\Theta)^{k-n}} I^x_t( \xb_{k-n}) \ J(\yb_n,  \xb_{k-n}) \ d  \xb_{k-n} \right)^2 d\yb_n\\
&= \int_{\R^d} m(x)^{2k-n} dx \int_{B(0,\Theta)^{n-1}} \left( \int_{B(0,\Theta)^{k-n}}  J^{0}(\yb_{n-1}, \xb_{k-n}) \ d \xb_{k-n} \right)^2 d \yb_{n-1}\\ &=: K^{(n)},
\end{align*}
where
\begin{align*}
J^{0}(\yb_{n-1}, \xb_{k-n}) = |\{\text{subgraphs} \ H' \ \text{of} \ G_S(0,\yb_{n-1}, \xb_{k-n}): H'\cong H\}|.
\end{align*}
Hence, by virtue of (\ref{VarFormula}) and (\ref{normFnFormula}) the variance can be written as
\begin{align*}
\V N_t &= \sum_{n=1}^k t^{2k-n}\rho_t^{d(2k-n-1)}\frac{n!}{(k!)^2}\binom{k}{n}^2 K^{(n)}_t,\\
&= t^k\rho_t^{d(k-1)}\sum_{n=1}^k (t\rho_t^d)^{k-n}\frac{n!}{(k!)^2} \binom{k}{n}^2 K^{(n)}_t.
\end{align*}
where $K^{(n)}_t$ is such that $\lim_{t\to\infty} K^{(n)}_t = K^{(n)}\in(0,\infty)$. The result follows.
\end{proof}

\subsection*{Proof of Theorem \ref{SLLN}}
In the upcoming proof of the strong law for subgraph counts we use the following consequence of the Borel-Cantelli lemma (see e.g. \linebreak \cite[Theorem 3.18]{Ka_2002}), well known from basic probability theory.

\begin{lem} \label{aslem}
Let $(X_n)_{n\in\N}$ be a sequence of real random variables and let $(a_n)_{n\in\N}$ be a sequence of real numbers converging to some $a\in\R$. Assume that for any $\epsilon>0$,
\begin{align*}
\sum_{n=1}^\infty \P(|X_n - a_n|\geq\epsilon) < \infty.
\end{align*}
Then
\begin{align*}
X_n \overset{a.s.}{\longrightarrow} a \ \ \text{as} \ \ n\to\infty.
\end{align*}
\end{lem}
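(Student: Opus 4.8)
The plan is to deduce the statement from the first (convergent-series) Borel--Cantelli lemma, which is available here since the surrounding text already invokes it, together with a routine countable-intersection argument that upgrades an "almost surely for each fixed $\epsilon$" conclusion into an "almost surely for all $\epsilon$" conclusion.

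First I would fix $\epsilon>0$ and apply Borel--Cantelli to the events $B_n^\epsilon = \{|X_n - a_n|\geq\epsilon\}$. The hypothesis $\sum_n \P(B_n^\epsilon)<\infty$ immediately gives $\P(\limsup_n B_n^\epsilon)=0$; in other words, almost surely only finitely many of the $B_n^\epsilon$ occur, which is to say that almost surely $|X_n - a_n|<\epsilon$ for all sufficiently large $n$. This already captures the probabilistic content of the lemma.

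The only point requiring a little care is that the exceptional null set produced in the previous step depends on $\epsilon$, and an uncountable union of null sets need not be null. To circumvent this, I would run the argument only along the countable family $\epsilon=1/m$, $m\in\N$. Setting $A_m = \limsup_n \{|X_n - a_n|\geq 1/m\}$ and $A = \bigcup_{m\in\N} A_m$, each $A_m$ is null by the first step, hence $\P(A)=0$ by countable subadditivity. For any $\omega\notin A$ and any $m$ one has $|X_n(\omega)-a_n|<1/m$ for all large $n$, so $\limsup_n |X_n(\omega)-a_n|\leq 1/m$; letting $m\to\infty$ forces $X_n(\omega)-a_n\to 0$. Combining this with the deterministic hypothesis $a_n\to a$ yields $X_n(\omega)\to a$ for every $\omega$ in the probability-one set $A^c$, which is precisely the asserted almost sure convergence.

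I do not expect any genuine obstacle, as this is a textbook consequence of Borel--Cantelli; the only thing to be vigilant about is the ordering of the quantifiers (fix $\epsilon$, apply Borel--Cantelli, then intersect over the countable sequence $\epsilon=1/m$), which the argument above arranges so that a single null set $A$ serves simultaneously for every threshold.
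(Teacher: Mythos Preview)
Your argument is correct and is exactly the standard route the paper has in mind: the paper does not give a proof but simply cites the lemma as a well-known consequence of the Borel--Cantelli lemma (referring to \cite[Theorem 3.18]{Ka_2002}). Your write-up spells out precisely this consequence, including the necessary countable-intersection step, so there is nothing to add.
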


\begin{proof}[Proof of Theorem \ref{SLLN}]
It follows from Theorem \ref{main} that for any real number $\epsilon>0$,
\begin{align*}
\P\left(\left|\frac{N_t}{t^k\rho_t^{d(k-1)}} - \frac{\E N_t}{t^k\rho_t^{d(k-1)}} \right|\geq \epsilon\right) \leq  p_u(t,\epsilon) + p_l(t,\epsilon),
\end{align*}
where
\begin{align*}
p_u(t,\epsilon) &:= \exp\left(-t\rho_t^{d(k-1)/k}\frac{\left(\left(\tfrac{\E N_t}{t^k\rho_t^{d(k-1)}} + \epsilon\right)^{\tfrac{1}{2k}}-\left(\tfrac{\E N_t}{t^k\rho_t^{d(k-1)}}\right)^{\tfrac{1}{2k}}\right)^2}{2k^2c_d}\right),\\
p_l(t,\epsilon) &:= \exp\left(-\frac{t^{2k}\rho_t^{2d(k-1)}\epsilon^2}{2k \V N_t}\right).
\end{align*}
Now, by Theorem \ref{SGCExp} (i) the sequence $\E N_t / (t^k \rho_t^{d(k-1)})$ converges to a positive real number. This implies that we can choose a constant $C>0$ such that for sufficiently large $t$,
\begin{align*}
p_u(t,\epsilon) \leq \exp\left(-t\rho_t^{d(k-1)/k} C\right).
\end{align*}
Moreover, assumption \eqref{regime} guarantees the existence of a constant $C'>0$ such that for sufficiently large $t$,
\begin{align*}
\exp\left(-t\rho_t^{d(k-1)/k} C\right) \leq \exp\left(-t^{\gamma/k}C'\right).
\end{align*}
We conclude that $\sum_{t\in\N} p_u(t,\epsilon) < \infty$. Furthermore, similarly as in \eqref{asympVbyE}, it follows from Theorem \ref{SGCExp} (iii) that
\begin{align*}
\frac{\V N_t}{t^{2k}\rho_t^{2d(k-1)}} \sim \sum_{n=1}^k \rho_t^{d(1-n/k)} (t^k\rho_t^{d(k-1)})^{-n/k} A^{(n)}.
\end{align*}
Invoking assumption \eqref{regime} yields existence of a constant $C''>0$ such that for sufficiently large $t$,
\begin{align*}
\sum_{n=1}^k \rho_t^{d(1-n/k)} (t^k\rho_t^{d(k-1)})^{-n/k} A^{(n)} \leq \sum_{n=1}^k \rho_t^{d(1-n/k)} (C'' t^\gamma)^{-n/k} A^{(n)}.
\end{align*}
Now, the sequence $\rho_t$ tends to $0$ and thus it is bounded above. Moreover, for any $t\in\N$ one has $t^{-k\gamma/k}\leq t^{-(k-1)\gamma/k}\leq \ldots \leq t^{-\gamma /k}$. From this together with the last two displays we derive existence of a constant $C'''>0$ such that for large $t$,
\begin{align*}
\frac{\V N_t}{t^{2k}\rho_t^{2d(k-1)}} \leq C''' t^{-\gamma/k}.
\end{align*}
Hence, for sufficiently large $t$,
\begin{align*}
p_l(t,\epsilon) \leq \exp\left(-\frac{t^{\gamma/k}\epsilon^2}{2kC'''}\right).
\end{align*}
It follows that $\sum_{t\in\N} p_l(t,\epsilon)<\infty$. Applying Lemma \ref{aslem} yields the result.
\end{proof}

\bigskip

\renewcommand{\bibname}{References}
\defbibheading{bibliography}[\refname]{\section*{#1}}
\printbibliography
\end{document}